\documentclass[a4paper,12pt]{article}
\usepackage{latexsym}
\usepackage{amsthm,amssymb,amsfonts,mathrsfs,amsmath}%\mathbb,proof
\usepackage{a4wide,fullpage}
\usepackage[colorlinks,citecolor=blue,urlcolor=blue]{hyperref}
\usepackage{graphicx}
\usepackage{enumerate}
\usepackage{bm}
\usepackage{color}
\usepackage{natbib}
\usepackage{tikz}
\usepackage{subcaption}

\newcommand{\R}{\mathbb R}
\newcommand\Prob{{\mathbb P}}
\newcommand\E{{\mathbb E}}
\newcommand\V{{\rm Var}}

\newtheorem{theorem}{Theorem}

\newtheorem{lemma}{Lemma}

\newtheorem{example}{Example}
\newtheorem{proposition}{Proposition}
\theoremstyle{remark}

\newcommand\inprob{\buildrel {\mathrm{P}} \over  \longrightarrow}
\newcommand\convlaw{\buildrel {\mathrm{D}} \over \longrightarrow }

\allowdisplaybreaks

\title{Asymptotic Normality for Triangle Counting in the Sparse $\beta$-model}
\author{Siang Zhang, Qunqiang Feng\thanks{Corresponding author email: fengqq@ustc.edu.cn},  
and Zhishui Hu\\
{\small Department of Statistics and Finance, School of Management} \\
{\small University of Science and Technology of China}\\
{\small Hefei 230026, China}}

\date{}
\begin{document}
\maketitle

\begin{abstract}
	
We study the number of triangles $T_n$ in the sparse $\beta$-model
on $n$ vertices, a random graph model that captures degree heterogeneity in real-world networks.
Using the norms of the heterogeneity parameter vector, 
we first determine the asymptotic mean and variance of $T_n$. 
Next, by applying the Malliavin–Stein method, we derive a non-asymptotic upper bound on the Kolmogorov distance between normalized $T_n$
and the standard normal distribution. 
Under an additional assumption on degree heterogeneity,
we further prove the asymptotic normality for $T_n$, as $n\to\infty$. 

\vskip 0.2cm
\noindent{\it Keywords:} Random networks, sparsity, degree heterogeneity, subgraph counts, Malliavin-Stein method
\vskip 0.2cm
	
\noindent{\it AMS 2020 Subject Classification}: Primary
05C80,    % random graphs
90B15;    % network models, stochastic
Secondary
%60F10,    % Large deviations
60F05   % Central limit and other weak theorems
\end{abstract}

\section{Introduction}

Subgraph counting is a fundamental problem in the study of random graphs and has been extensively investigated over several decades. 
A significant body of research has focused on this topic within the framework of classical Erd\H{o}s-R\'{e}nyi (ER) random graphs 
(see, e.g., \cite{janson2000random,bollobas2001} and references therein). 
For instance, as the graph size (i.e., the number of vertices) tends to infinity, 
the necessary and sufficient conditions for the asymptotic normality of the count of any fixed subgraph in the ER random graph model
were first established in the seminal work of Ruci\'nski \cite{rucinski1988whenas}. 
More recently, this result has been complemented with explicit bounds on the Kolmogorov distance in \cite{Privault2020}; 
see also \cite{rollin2022,eichelsbacher2023Kol,eichelsbacher2023simplified}.

Beyond random graph theory, subgraph counting serves as a powerful tool in network science, 
enabling the quantification of specific subgraphs (e.g., triangles, cycles, cliques
and trees) to uncover the structural and functional properties of complex systems (see, e.g., \cite{milo2002}). 
Applications range from identifying subgraphs in biological networks \cite{alon2007} to analyzing the dynamics of functional brain networks \cite{bullmore2009}, providing insights into both local and global organization in real-world networks. 
However, subgraph counting is computationally intensive, particularly for large networks (see, e.g., \cite{Teixeira2015}). 
For a comprehensive overview of subgraph counting methods and efficient algorithms in network science, 
we refer to \cite{Ribeiro2021}.

Although the ER random graph model has played a pivotal role in the development of random graph theory and network science, 
its simplicity and restrictive assumptions limit its applicability to model real-world networks. 
Real-world networks exhibit complex topological features that are not captured by the ER random graph model \cite{newman2018}. 
One such feature is degree heterogeneity \cite{Estrada2010}, which refers to the variability in vertex degrees within a network. 
Unlike ER random graphs, where vertex degrees are concentrated around the average degree, 
many real-world networks exhibit heavy-tailed degree distributions, often approximated by power-law behavior (see, e.g., \cite{barabasi1999,clauset2009}).  
This indicates the presence of a few highly connected hubs and many sparsely connected vertices.
Another key feature is the clustering coefficient, 
which quantifies the tendency of nodes to form tightly knit groups \cite{newman2001}. 
In real-world networks, the clustering coefficient is typically high and remains relatively constant as the network grows, 
whereas it is notably low in ER graphs with comparable edge density (see, e.g., \cite{newman2009random}). 
In other words, real-world networks are usually sparse but contain a large number of triangles.
For example, in social networks, the probability that two friends of a given individual are also friends with each other is significantly higher than predicted by the ER random graph model \cite{watts1998}.

The $\beta$-model is a widely studied statistical network model that incorporates vertex-specific parameters to capture degree heterogeneity in real-world networks \cite{chatterjee2011randomgw}. 
It belongs to the broader class of exponential random graph models, which describe the probability of observing a given network structure (see, e.g., \cite{holland1981, robins2007}). For statistical inference on the parameters of the $\beta$-model and its variants, we refer to \cite{Rinaldo2013,Yan2013clt,Karwa2016,chen2021,Chang2024}.

To the best of our knowledge, in contrast to ER random graphs with the regularity of vertex degrees which could facilitate analysis, 
there are only a few theoretical results on subgraph counting in heterogeneous network models. 
Several limit laws of the clustering coefficient, which is closely related to the number of triangles, 
in the configuration model and  rank-1 inhomogeneous
random graph model for scale-free networks have been
established (see Theorems 1.3--1.5 in \cite{vanderhofstad2020assortativity}).
Employing the generalized $U$-statistics, 
the number of fixed-size cliques (where triangles are a special case) in graphon based random graphs
has also been considered in \cite{Hladky2021}; see \cite{bhattacharya2023fluctuations} 
for general subgraph counts. Owing to the inhomogeneous structure of these models,  
these limit theorems do not yield explicit rates of convergence.
In this paper, we aim to derive the asymptotic properties of triangle counts in the sparse $\beta$-model as the number of vertices tends to infinity. In particular,
using the Malliavin-Stein method \cite{nourdin_peccati_2012}, 
under several mild conditions we establish the asymptotic normality of triangle counts with
an explicit Berry-Esseen bound. 

Throughout this paper, we shall use the following notation. 
Let $[n]$ denote the set $\{1,2,\dots,n\}$, for any positive integer $n$.
We use $C > 0$ to denote a generic constant which may vary from one occasion to another.
For a finite set $S$, let $|S|$ be the cardinality of $S$.
For a vector $\bm{x}=(x_1,x_2,\dots,x_n)\in \mathbb{R}^n$, 
denote by $\|\bm{x}\|_s=(|x_1|^s+|x_2|^s+\dots+|x_n|^s)^{1/s}$ the $L_s$-norm of $\bm{x}$ for $s>0$,
and $x_{\max}$ and $x_{\min}$ the maximal and minimal entry of $\bm{x}$, respectively.  

The rest of this paper is organized as follows. 
In Section 2, we first formally introduce the $\beta$-model, 
and then state our main result demonstrating the asymptotic normality of 
triangle counts under mild sparsity and heterogeneity conditions.
In Section 3, we derive the asymptotic mean and variance of triangle count under the sparsity condition. 
Finally, with an adaption of the Malliavin-Stein method we prove our main results in
Section 4.

\section{Model description and main results}\label{sec:mainresults}

The $\beta$-model is formally defined as follows \cite{chatterjee2011randomgw}.
Consider a random graph with vertex set $[n]$,
where $n\ge 2$ is an integer.
The presence of an edge between vertices $i$ and $j$ is determined independently with probability
\begin{align} \label{pijbeta}
p_{ij}= \frac{e^{\beta_i + \beta_j}}{1+ e^{\beta_i + \beta_j}},  \quad 1\le i \neq j\le n,
\end{align} 
where $\beta_{i}\in \mathbb{R}$ is the degree heterogeneity parameter for vertex $i$. 
We note that $\beta_i$ can be negative and may depend on $n$.
These parameters quantify the connectivity propensity of vertices:
 a larger $\beta_{i}$ corresponds to a higher likelihood of vertex $i$ forming edges.
Notably, when $\beta_{i}=\beta$ is a constant for all $i\in [n]$, 
the $\beta$-model reduces to the ER random graph model with edge probability $p={\rm e}^{2\beta}/(1+{\rm e}^{2\beta})$. 

Let $I_{ij}$ denote the indicator for the presence
of an edge between vertices $i$ and $j$. By construction, 
$I_{ii}=0,I_{ij}=I_{ji}$, 
and the collection $\{I_{ij},1\le i<j\le n\}$ consists of independent Bernoulli random variables with success rates $p_{ij}$.
Clearly, the adjacency matrix ${\bm A}=(I_{ij})_{n\times n}$ 
is symmetric with zero diagonal.
In terms of ${\bm A}$, the number of triangles $T_n$ in the $\beta$-model on $n$ vertices is given by
\begin{align}\label{eq:Tn1}
T_n = \frac16 \text{tr}({\bm A}^3)= \sum_{1\le i<j<k\le n} I_{ij} I_{jk} I_{ki},
\end{align} 
where $\text{tr}(\cdot)$ denotes the matrix trace.

For simplicity of notation, we define $\mu_i={\rm e}^{\beta_i}>0$ for each $i\in [n]$ and let
\[\bm{\mu}=(\mu_1,\mu_2,\dots,\mu_n)^\top,\]
omitting the subscript $n$.
The edge probabilities in \eqref{pijbeta} then become
\begin{align}\label{pijmu}
p_{ij}= \frac{\mu_i\mu_j}{1+ \mu_i\mu_j}. 
\end{align} 
%Let $\mu_{\max}=\max\{\mu_1,\mu_2,\dots,\mu_n\}$ and $\mu_{\min}=\min\{\mu_1,\mu_2,\dots,\mu_n\}$.
Deriving asymptotic properties of $T_n$ under general parameters $\beta_i$ presents significant challenges due to degree heterogeneity.
To address this, we impose the following sparsity conditions:   
\begin{align}  \label{eq:condion1}
\mu_{\max}\to 0 \quad \mbox{and} \quad  \|\bm\mu\|_2 \rightarrow \infty.
\end{align}
The first condition ensures network sparsity, 
while the second guarantees a non-degenerate graph structure with a substantial number of triangles 
(see Proposition \ref{prop:mv} below).
Note that for any $s>t>0$,
\[
\|\bm\mu\|_s^s\le \mu_{\max}^{\,s-t}\|\bm\mu\|_t^t,
\]
since each $\mu_i$ is positive. It thus follows by the assumption $\mu_{\max}\to0$ in \eqref{eq:condion1} that
\begin{equation}\label{normorder}
    \|\bm\mu\|_s^s = o\big(\|\bm\mu\|_t^t\big), \quad  s>t>0.
\end{equation}
 
The Kolmogorov distance between random variables $X$ and $Y$ is defined as
\[
d_K(X,Y)=\sup_{z\in\R}|\Prob(X\leq z)-\Prob(Y\leq z)|.
\]
Define the normalized triangle count $F_n$ as
\begin{equation}\label{DefFn}
F_n= \frac{T_n - \mathbb{E}[T_n]}{\sqrt{\V[T_n]}}.    
\end{equation}
The following theorem provides an upper bound for $d_{K}(F_n, \mathcal{N})$,
where $\mathcal{N}$ denotes a standard normal random variable.

\begin{theorem}\label{Theorem:main} 
Under the sparsity condition \eqref{eq:condion1}, we have
\begin{align*}
  d_{K}(F_n, \mathcal{N}) \le\frac{C}{\|\bm\mu\|_2^{\frac{5} {2}}\big(\|\bm\mu\|_3^6+\|\bm\mu\|_2^2\big)}\sum_{\ell=1}^5A_{\ell}, 
\end{align*}
where $C>0$ is a constant, and
\begin{equation}\label{def:Aell}
\begin{aligned}
A_1&=\|\bm\mu\|_{\frac{3}{2}} ^ {\frac{3}{4}} \|\bm\mu\|_{\frac{5}{2}}^{\frac{5}{4}}, \quad
A_2=\|\bm\mu\|_{\frac{3}{2}}^{\frac{3}{4}}\|\bm\mu\|_{2}^{\frac12}\|\bm\mu\|_{\frac{7}{2}}^{\frac{7}{4}}\|\bm\mu\|_{4}^{2},\\
A_3&=\|\bm\mu\|_2^{\frac{5}{2}}\|\bm\mu\|_5^{5},\quad
A_4=\|\bm\mu\|_{2}^{\frac{3}{2}} \|\bm\mu\|^{\frac{5}{2}}_{\frac{5}{2}}\|\bm\mu\|_{5}^{\frac{5}{2}},\quad 
A_5=\|\bm\mu\|_{\frac{7}{4}}^{\frac{7}{4}}\|\bm\mu\|_{\frac{7}{2}}^{\frac{7}{4}}.
\end{aligned}   
\end{equation}
\end{theorem}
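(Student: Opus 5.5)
We will use the Malliavin--Stein bound for Rademacher (or, more conveniently, non-symmetric Bernoulli) functionals. Write $X_{ij}=(I_{ij}-p_{ij})/\sqrt{p_{ij}q_{ij}}$ with $q_{ij}=1-p_{ij}$, so that $F_n$ is a functional of the independent normalized Bernoulli sequence $\{X_{ij}\}$. We will apply the Kolmogorov bound of Krokowski, Reichenbachs and Th\"ale (as refined in \cite{Privault2020} for subgraph counts):
\begin{align*}
d_K(F_n,\mathcal N)\le{}& \E\big|1-\langle DF_n,-DL^{-1}F_n\rangle\big|+\tfrac{\sqrt{2\pi}}{8}\E\big\langle (DF_n)^2/\sqrt{pq},|DL^{-1}F_n|\big\rangle\\
&+\tfrac12\E\big\langle (DF_n)^2/\sqrt{pq},|F_nDL^{-1}F_n|\big\rangle+\E\big\langle (DF_n)^2/\sqrt{pq},|DF_n\,DL^{-1}F_n|\big\rangle,
\end{align*}
where $D_{ij}$ is the discrete gradient in coordinate $(i,j)$. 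The first step is to compute these ingredients explicitly. We will decompose $T_n-\E T_n$ into its Hoeffding/chaos expansion. Writing $I_{ij}=p_{ij}+\sqrt{p_{ij}q_{ij}}X_{ij}$, the product $I_{ij}I_{jk}I_{ki}$ splits into chaoses of order $1,2,3$, say $T_n-\E T_n=J_1+J_2+J_3$. Since $-DL^{-1}$ acts on the $m$-th chaos as $m^{-1}D$, every quantity becomes an explicit polynomial in the $X$'s. In particular, $D_{ij}T_n=\sqrt{p_{ij}q_{ij}}\sum_{k\ne i,j}I_{ik}I_{jk}$ up to the usual conditioning convention, which counts common neighbours.

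The second step is to take the mean and variance from Proposition~\ref{prop:mv}, which we are allowed to assume. Using $p_{ij}\asymp\mu_i\mu_j$ under \eqref{eq:condion1}, the relevant form is $\V[T_n]\asymp \|\bm\mu\|_2^2\|\bm\mu\|_3^6+\|\bm\mu\|_2^6$, where the first term comes from first chaos and the second from third chaos. Up to constants this is $\|\bm\mu\|_2^2(\|\bm\mu\|_3^6+\|\bm\mu\|_2^4)$; the prefactor in the theorem should arise after simplifying the ratio of bounds to variance, and we will match it at the end.

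The third step bounds each term. For the first term, we bound $\E|1-\langle DF,-DL^{-1}F\rangle|$ by the square root of its variance. This splits into variances of sums like $\sum_{ij}p_{ij}q_{ij}\,(\text{common-neighbour counts})^2$, each a polynomial of degree at most four in independent indicators. Their variances are sums over pairs of overlapping edge configurations. Each such configuration contributes a product $\prod_i\mu_i^{d_i}$, where $d_i$ is the degree of vertex $i$ in the union graph, and this product factorizes into $\prod\|\bm\mu\|_{d_i}^{d_i}$. For the remaining three terms we apply Cauchy--Schwarz and Hölder, for example $\E|F DL^{-1}F|\le \|F\|_4\|DL^{-1}F\|_4$, and reduce everything to fourth moments of $D_{ij}T_n$. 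These are again sums over configurations of glued triangles sharing the edge $ij$. The fractional exponents in $A_1,A_2,A_4,A_5$ (such as $\|\cdot\|_{3/2}^{3/4}$) indicate that square roots of such sums appear, arising from applying Cauchy--Schwarz to $\sum_{ij}(p_{ij})^{1/2}\cdots$ where the factor $1/\sqrt{pq}$ introduces $\mu_i^{-1/2}\mu_j^{-1/2}$.

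The main obstacle is the combinatorial bookkeeping in the third step. We must enumerate all isomorphism types of unions of two, three or four triangles sharing edges and identify, for each type, the vertex degrees. We then need to show, via \eqref{normorder} and monotonicity such as $\|\bm\mu\|_s^s\le\mu_{\max}^{s-t}\|\bm\mu\|_t^t$, that every type is dominated by one of $A_1,\dots,A_5$. Here \eqref{normorder} lets us discard types with higher total degree. The approach we will try first is to organize the types by how many distinct vertices they contain and by their degree sequences, bound each type crudely by $\prod_v\|\bm\mu\|_{d_v}^{d_v}$, and keep only the maximal ones. Dividing by the appropriate power of $\V[T_n]$ should then yield the stated bound.
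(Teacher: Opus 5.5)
\textbf{Verdict.} Your route, a first-order Malliavin--Stein bound with $L^{-1}$ computed through the chaos decomposition, is genuinely different from the paper's and legitimate in principle. As written, however, it does not prove the theorem: the only step with real content is missing, and the variance is misquoted.

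\textbf{How the paper argues.} The paper never uses $L^{-1}$ or chaos expansions. It applies the second-order Poincar\'e inequality of Lemma~\ref{lem:Eich}, which needs only two ingredients. The first is $D_aF_n=\sqrt{h_a}V_a/\sqrt{\V[T_n]}$, where $V_a$ counts common neighbours. The second is $D_{ab}F_n=\sqrt{h_ah_b}\,\Delta_{ab}/\sqrt{\V[T_n]}$, where $\Delta_{ab}$ is a single edge indicator. These are combined with the Rosenthal bound $\E[V_a^s]\le C(\|\bm\mu\|_2^2\mu_{i_a}\mu_{j_a}+\|\bm\mu\|_2^{2s}\mu_{i_a}^s\mu_{j_a}^s)$.

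\textbf{The missing step.} You need to show that each of your four terms is at most $C\|\bm\mu\|_2^{3/2}\sum_\ell A_\ell/\V[T_n]$ for every $\bm\mu$ satisfying \eqref{eq:condion1}. You defer this to ``bookkeeping'' and ``matching at the end''. The specific $A_\ell$ are artifacts of the paper's inequality, not of the problem:
\begin{itemize}
\item Take $e_a=\{i,j\}$ and $e_b=\{i,k\}$. In $\sum_{a,b}h_ah_b\sqrt{\E[V_a^4]\E[\Delta_{ab}]}$, use $\sqrt{\E[V_a^4]}\lesssim\|\bm\mu\|_2(\mu_i\mu_j)^{1/2}+\|\bm\mu\|_2^4\mu_i^2\mu_j^2$ together with $\sqrt{\E[\Delta_{ab}]}=p_{jk}^{1/2}$.
\item The first piece gives $\|\bm\mu\|_2\sum_{i,j,k}\mu_i^{5/2}\mu_j^2\mu_k^{3/2}=\|\bm\mu\|_2^3A_1^2$, and the second piece gives $\|\bm\mu\|_2^3A_2^2$.
\item $A_4$ and $A_5$ come from the term $\sqrt{\E[V_a^2V_b^2]\,\E[\Delta_{ca}\Delta_{cb}]}$ in $B_1$, and $A_3$ comes from $B_3$.
\end{itemize}
Your inequality has no second-gradient terms, so it will produce a different list of norm products. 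You would then have to prove that each one is dominated by $\|\bm\mu\|_2^{3/2}\sum_\ell A_\ell$. For heterogeneous $\bm\mu$ this needs explicit interpolation inequalities (Lemma~\ref{normsCS}, \eqref{normorder}); discarding configurations of ``higher total degree'' is not enough.

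Your heuristic for the fractional exponents is also misplaced. In $\sum_a(p_aq_a)^{-1/2}\E[(D_aF)^2|D_aL^{-1}F|]$ the weight is cancelled by the factor $h_a^{3/2}$ from the three gradients. It is not what generates norms like $\|\bm\mu\|_{3/2}$; in the paper those come from the square roots $\sqrt{\E[\Delta_{ab}]}$.

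\textbf{The variance error.} The first chaos contributes $\sum_{i<j}c_{ij}^2h_{ij}\approx\frac12\|\bm\mu\|_2^4\|\bm\mu\|_3^6$, because $c_{ij}\approx\|\bm\mu\|_2^2\mu_i\mu_j$. The quantity $\|\bm\mu\|_2^2\|\bm\mu\|_3^6$ that you wrote is the negligible second-chaos term. By \eqref{VarTn}, $\V[T_n]\asymp\|\bm\mu\|_2^4(\|\bm\mu\|_3^6+\|\bm\mu\|_2^2)$. The theorem's denominator is exactly $\V[T_n]/\|\bm\mu\|_2^{3/2}$, which is why the paper reduces everything to $\sum_k\widetilde B_k\le C\|\bm\mu\|_2^3\sum_\ell A_\ell^2$. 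With your $\|\bm\mu\|_2^2(\|\bm\mu\|_3^6+\|\bm\mu\|_2^4)$ the prefactor cannot come out right.

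\textbf{A smaller issue.} The step $\E|F_n\,D_aL^{-1}F_n|\le\|F_n\|_4\|D_aL^{-1}F_n\|_4$ presupposes $\E[F_n^4]=O(1)$. This is not free in the sparse regime, since hypercontractivity constants for Bernoulli variables degenerate as $p\to0$. Use $\|F_n\|_2=1$ and Cauchy--Schwarz instead.

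\textbf{How to repair it.} Either switch to Lemma~\ref{lem:Eich} and bound the five sums $\widetilde B_k$ over at most four vertices as the paper does. Or carry the KRT computation all the way through and prove the domination by $\sum_\ell A_\ell$ explicitly.
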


The bound in  Theorem \ref{Theorem:main}  is complicated due to the generality of  $\bm\mu$.  
The following example demonstrates that in specific cases, 
this bound matches the performance of existing results.

\begin{example} 
    Fix a positive integer $K$ and a sequence of positive constants $\{\pi_r, 1\le r\le K\}$
satisfying  $\sum_{r=1}^K\pi_r=1$.
Let $\{V_r, 1\le r\le K\}$ be a partition of $[n]$ with the cardinality $|V_r|=\pi_r n+o(n)$. 
Assume that for each $i\in V_r$ with $1\le r\le K$,
\[\mu_i=\theta_r\,n^{-\alpha/2},\] 
where the constants $\theta_r>0$ and $0<\alpha<1$.
Then we have that $\mu_{\max}$ is of order $n^{-\alpha/2}$, and for any fixed $s>0$,
\[
\|\bm\mu\|_s^s=\sum_{r=1}^K |V_r|\,\theta_r^s n^{-\alpha s/2}
= n^{1-\alpha s/2}\Big(\sum_{r=1}^K \pi_r\theta_r^s\Big)(1+o(1)).
\]
In particular, when $s=2$, the quantity $\|\bm\mu\|_2^2$ is of order $n^{1-\alpha}$.

This blockwise-constant scaling is standard in sparse stochastic block models and their degree-corrected variants; see, e.g., \cite{ holland1981,KarrerNewman2011,mosselneemansly2015,abbe2018} for more background.
An application of Theorem \ref{Theorem:main} together with some basic calculations yields that
\begin{equation*}
   d_K(F_n,\mathcal N)\le C\, n^{-\eta(\alpha)},  
\end{equation*}
where
\[
\eta(\alpha)=
\begin{cases}
1-\alpha, & 0<\alpha\le \tfrac12,\\[0.2em]
\tfrac34-\tfrac{\alpha}2, & \tfrac12< \alpha\le \tfrac23,\\[0.2em]
\tfrac54(1-\alpha), & \tfrac23<\alpha<1.
\end{cases}
\]

In the special case $K=1$, we have that $\mu_i\equiv\mu=cn^{-\alpha/2}$ for all $i\in [n]$, 
where $c>0$ and $\alpha\in (0,1)$. Then our model reduces to ER
random graph model with edge probability
\begin{equation*}
  p= \frac{\mu^2}{1+ \mu^2}\sim c^2 n^{-\alpha}
\end{equation*}
as $n\rightarrow\infty$. 
In this case, the order $\eta(\alpha)$ coincides with that established in \cite[Theorem 1.1]{kai2017discretemalliavin}. 
\end{example}

Notably, the upper bound in Theorem \ref{Theorem:main} may not vanish as $n\to\infty$
under the sparsity condition \eqref{eq:condion1}, 
primarily due to the $L_{3/2}$-norm of $\bm\mu$ term in $A_1$ and $A_2$, 
which can grow very fast.
To establish asymptotic normality for $T_n$, 
we introduce two additional heterogeneity conditions:
\begin{equation}\label{heterocond}
\|\bm\mu\|_{\frac{3}{2}} ^ {\frac{3}{2}}=O\big(\|\bm{\mu}\|_2^6\big),
\end{equation}
and
\begin{align}\label{eq:condition2}
\frac{\mu_{\max}}{\mu_{\min}} = O\big(\|\bm{\mu}\|_2^\frac{3}{2}\big).
\end{align}
Condition \eqref{heterocond} directly restricts the growth rate of the $L_{3/2}$-norm of $\bm\mu$.  
Condition \eqref{eq:condition2} accommodates considerable degree heterogeneity in the $\beta$-model, 
thereby permitting the ratio $\mu_{\max}/\mu_{\min}$ to diverge at an appropriate rate. 
This is in sharp contrast to the ER random graph model, where this ratio is invariably 1.
Such regularity conditions are widely adopted in statistical network analysis. 
For instance, in community detection problems, 
several constraints on the extreme values of degree heterogeneity parameters are imposed to bound the estimation error (see, e.g. \cite{jin2015}).

The following result establishes the asymptotic normality of $T_n$ under these conditions.

\begin{theorem}\label{Thm:AN}
Under the sparsity condition \eqref{eq:condion1}, if either \eqref{heterocond} or \eqref{eq:condition2} holds,
then as $n\to\infty$,
\begin{align*}
F_n \convlaw \mathcal{N},
\end{align*}    
where $\convlaw$ denotes the convergence in distribution.
\end{theorem}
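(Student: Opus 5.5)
The plan is to deduce Theorem~\ref{Thm:AN} from Theorem~\ref{Theorem:main}. Since $d_K(F_n,\mathcal N)\to0$ implies $F_n\convlaw\mathcal N$, it suffices to show that $A_\ell/D_n\to0$ for each $\ell=1,\dots,5$, where
\[
D_n=\|\bm\mu\|_2^{5/2}\big(\|\bm\mu\|_3^6+\|\bm\mu\|_2^2\big)\ \ge\ \|\bm\mu\|_2^{5/2}\max\big\{\|\bm\mu\|_3^6,\|\bm\mu\|_2^2\big\}.
\]
Write $a=\|\bm\mu\|_2^2\to\infty$. Throughout I will use \eqref{normorder}, which gives $\|\bm\mu\|_s^s=o(a)$ for every $s>2$, together with Cauchy--Schwarz and Hölder to push lower-index norms toward $\|\bm\mu\|_{3/2}$ and $\|\bm\mu\|_2$.

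\emph{Terms not involving $\|\bm\mu\|_{3/2}$.} These need only \eqref{eq:condion1}.
\begin{itemize}
\item $A_3/D_n\le \|\bm\mu\|_5^5/a=o(1)$.
\item $A_4/D_n\le \|\bm\mu\|_{5/2}^{5/2}\,(\|\bm\mu\|_5^5)^{1/2}/a^{3/2}=o(a)\,o(a^{1/2})/a^{3/2}=o(1)$.
\end{itemize}

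\emph{Terms involving $\|\bm\mu\|_{3/2}$.} For $A_5$, Cauchy--Schwarz applied to $\mu_i^{7/4}=\mu_i^{3/4}\mu_i$ gives
\[
\|\bm\mu\|_{7/4}^{7/4}\le \big(\|\bm\mu\|_{3/2}^{3/2}\big)^{1/2}a^{1/2}.
\]
Set $B=\|\bm\mu\|_{3/2}^{3/4}$. Then all of $A_1,A_2,A_5$ have the form $B$ times powers of higher norms:
\begin{itemize}
\item $A_1=B\cdot o(a^{1/2})$;
\item $A_5\le B\,a^{1/2}\,(\|\bm\mu\|_{7/2}^{7/2})^{1/2}=B\cdot o(a)$;
\item $A_2=B\,a^{1/4}(\|\bm\mu\|_{7/2}^{7/2})^{1/2}\|\bm\mu\|_4^2=B\cdot o(a^{5/4})$.
\end{itemize}
So the task reduces to showing $B\,a^{5/4}=O(D_n)$, or slightly better when the $o(\cdot)$ factors are kept.

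\emph{Under condition \eqref{heterocond}.} This gives $B=O(a^{3/2})$. On its own that is not enough, since $a^{5/4}\cdot a^{3/2}$ exceeds $a^{9/4}$. I would therefore first sharpen the estimates, not using only the crude bound $D_n\ge a^{9/4}$.
\begin{itemize}
\item One route is to exploit the $\|\bm\mu\|_3^6$ term in $D_n$. For this I would seek a Hölder-type inequality linking $\|\bm\mu\|_{3/2}$, $\|\bm\mu\|_2$ and $\|\bm\mu\|_3$. The interpolation $a\le\|\bm\mu\|_{3/2}\|\bm\mu\|_3$ is a first candidate, so that large $\|\bm\mu\|_{3/2}$ is partly offset by the denominator.
\item The other route is to redo the Hölder splittings of $A_2$ and $A_5$ directly, e.g.\ $\mu^{7/4}=\mu^{3/4}\mu^{1}$ with exponents other than $(2,2)$, and $\|\bm\mu\|_4^2\le\mu_{\max}\|\bm\mu\|_2$. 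The goal is that each $A_\ell$ picks up a factor $\mu_{\max}^{\gamma}$ with $\gamma>0$, whose decay absorbs the excess power of $a$.
\end{itemize}
This balancing of exponents is the step I expect to be the main obstacle. I would attack it by writing every norm as $\sum\mu_i^s$ and checking the exponent bookkeeping case by case.

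\emph{Under condition \eqref{eq:condition2}.} Write $m=\mu_{\min}$ and $r=\mu_{\max}/\mu_{\min}=O(a^{3/4})$. For every $s$,
\[
n m^s\le \|\bm\mu\|_s^s\le n r^s m^s,\qquad a\ge nm^2 .
\]
In particular
\[
\|\bm\mu\|_{3/2}^{3/2}\le n r^{3/2}m^{3/2}\le r^{3/2}\,a\,(nm^2)^{-1/4}\,n^{1/4}m^{-1}\cdots .
\]
I would simplify this using $m\le\mu_{\max}\to0$ and $n\ge a/\mu_{\max}^2$ to obtain an explicit bound of $B$ in terms of $a$, $r$ and $\mu_{\max}$. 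I would then plug this into the three ratios $A_1/D_n$, $A_2/D_n$, $A_5/D_n$ and check that the exponent of $a$ is negative once $r=O(a^{3/4})$ is used, with the vanishing $\mu_{\max}$ factors helping where the power of $a$ is exactly balanced. If the constraint $r=O(a^{3/4})$ turns out to be sharp for one of the terms (most likely $A_2$), that term dictates the precise bookkeeping.

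Combining both cases with Theorem~\ref{Theorem:main} yields $d_K(F_n,\mathcal N)\to0$, and hence the theorem.
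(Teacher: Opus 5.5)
Your reduction to $A_\ell=o(D_n)$ is fine, as are your estimates for $A_3$, $A_4$ and $A_1$ (the last is immediate from $B=O(\|\bm\mu\|_2^3)$). But under \eqref{heterocond} the proposal stops exactly at the decisive step, and neither route you suggest can close it.

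Comparing with $\|\bm\mu\|_2^{9/2}$ alone is hopeless. Take $N$ vertices with $\mu_i=1/\log N$ and about $N^9/(\log N)^6$ vertices with $\mu_i=N^{-4}$. Then \eqref{eq:condion1} and \eqref{heterocond} hold, yet $A_2^2/\|\bm\mu\|_2^{9}\asymp N/(\log N)^{11/2}\to\infty$. So the $\|\bm\mu\|_3^6$ part of $D_n$ is indispensable. Your second route fails because no factor $\mu_{\max}^{\gamma}$ can absorb a power of $\|\bm\mu\|_2$: the two rates are unrelated.

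Your first route runs H\"older in the wrong direction. The inequality $\|\bm\mu\|_2^2\le\|\bm\mu\|_{3/2}\|\bm\mu\|_3$ only yields $\|\bm\mu\|_3\ge\|\bm\mu\|_2^2/\|\bm\mu\|_{3/2}$. That is a lower bound on the denominator which deteriorates as $\|\bm\mu\|_{3/2}$ grows.

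The missing idea is to measure the high-order factors of $A_2$ and $A_5$ against $\|\bm\mu\|_3^3$ rather than $\|\bm\mu\|_2^2$. By \eqref{normorder} with $t=3$, both $\|\bm\mu\|_{7/2}^{7/4}$ and $\|\bm\mu\|_4^2$ are $o(\|\bm\mu\|_3^{3/2})$. Combining this with $B=O(\|\bm\mu\|_2^3)$ and your bound $A_5\le B\|\bm\mu\|_2\|\bm\mu\|_{7/2}^{7/4}$ gives
\[
A_2=o\big(\|\bm\mu\|_2^{7/2}\|\bm\mu\|_3^{3}\big)=o\big((xy)^{1/2}\big),\qquad
A_5=o\big(\|\bm\mu\|_2^{4}\|\bm\mu\|_3^{3/2}\big)=o\big(x^{1/4}y^{3/4}\big),
\]
where $x=\|\bm\mu\|_2^{5/2}\|\bm\mu\|_3^6$ and $y=\|\bm\mu\|_2^{9/2}$, so that $D_n=x+y$. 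Weighted AM--GM then gives $o(D_n)$. This is the paper's argument.

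Under \eqref{eq:condition2} the sketch is likewise unexecuted, and as set up it does not close. First, any bound on $B$ must carry a diverging factor $\mu_{\max}^{-1/4}$ that is not dominated by powers of $\|\bm\mu\|_2$: for constant $\bm\mu$ (so $r=1$), $B^2=\mu_{\max}^{-1/2}\|\bm\mu\|_2^2$. Hence the powers of $\mu_{\max}$ that your reductions $A_\ell=B\cdot o(\cdot)$ discard, coming from $\|\bm\mu\|_t^t\le\mu_{\max}^{t-2}\|\bm\mu\|_2^2$, are not a bonus. They are needed to cancel this factor.

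Second, applying your sandwich $n\mu_{\min}^s\le\|\bm\mu\|_s^s\le nr^s\mu_{\min}^s$ to both factors of $A_1^2=\|\bm\mu\|_{3/2}^{3/2}\|\bm\mu\|_{5/2}^{5/2}$ gives only $A_1\le r^2\|\bm\mu\|_2^2=O(\|\bm\mu\|_2^5)$. That does not beat $\|\bm\mu\|_2^{9/2}$.

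What is needed is to pair each low-order norm with a high-order one before estimating. The paper does this with the reverse Cauchy--Schwarz inequality \eqref{eq:reverse2} (Lemma~\ref{lemma:reverse}), taking $(s,t)=(3/2,5/2)$ and $(3/2,7/2)$. This gives $A_1=O(\|\bm\mu\|_2^{7/2})$ and $A_2,A_5=o(\|\bm\mu\|_2^4\|\bm\mu\|_3^{3/2})$, and the same AM--GM step finishes. Alternatively, the one-sided bounds $\|\bm\mu\|_{3/2}^{3/2}\le\mu_{\min}^{-1/2}\|\bm\mu\|_2^2$ and $\|\bm\mu\|_{5/2}^{5/2}\le\mu_{\max}^{1/2}\|\bm\mu\|_2^2$ do the same job for $A_1$, giving $A_1\le r^{1/4}\|\bm\mu\|_2^2$.
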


\section{Mean and variance}

In this section, we derive the asymptotic mean and variance of $T_n$, 
the number of triangles in the \(\beta\)-model, 
under the sparsity condition \eqref{eq:condion1}. 

\begin{proposition}\label{prop:mv}
Suppose that the sparsity condition \eqref{eq:condion1} holds. 
As $n\to\infty$, we have
\begin{align}
\E[{T_n}]&= \frac16\|\bm\mu\|_2^6(1+o(1)),     \label{ETn} \\
\V[{T_n}]&= \frac16\|\bm\mu\|_2^4\big(3\|\bm\mu\|_3^6+\|\bm\mu\|_2^2\big)(1+o(1)). \label{VarTn}
\end{align}
\end{proposition}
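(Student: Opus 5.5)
Our plan is a direct moment computation. The main tool is the approximation $p_{ij}=\mu_i\mu_j(1+O(\mu_{\max}^2))$ uniformly in $i\ne j$, which follows from \eqref{pijmu} and $\mu_{\max}\to0$; in particular $1-p_{ij}=1+o(1)$ uniformly. We write $\sum^{*}$ for sums over distinct indices.

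\textbf{Mean.} By independence, $\E[T_n]=\sum_{i<j<k}p_{ij}p_{jk}p_{ki}=\frac16\sum^{*}_{i,j,k}\mu_i^2\mu_j^2\mu_k^2\,(1+o(1))$. Removing the distinctness restriction costs only terms with a repeated index, which are bounded by $C\|\bm\mu\|_4^4\|\bm\mu\|_2^2+C\|\bm\mu\|_6^6$. By \eqref{normorder} these are $o(\|\bm\mu\|_2^6)$, since $\|\bm\mu\|_4^4=o(\|\bm\mu\|_2^2)$ and $\|\bm\mu\|_2\to\infty$. This gives \eqref{ETn}.

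\textbf{Variance.} Write $T_n=\sum_\tau X_\tau$, where $\tau$ ranges over vertex triples and $X_\tau$ is the product of its three edge indicators. Then $\V[T_n]=\sum_{\tau,\sigma}\mathrm{Cov}(X_\tau,X_\sigma)$, and the covariance is nonzero only when $\tau$ and $\sigma$ share at least one edge. There are two cases.
\begin{enumerate}[(i)]
\item $\tau=\sigma$. The term is $\E X_\tau(1-\E X_\tau)=\E X_\tau(1+o(1))$, so these terms sum to $\frac16\|\bm\mu\|_2^6(1+o(1))$ by the mean computation.
\item $\tau,\sigma$ share exactly one edge $\{i,j\}$, with third vertices $k\ne l$. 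The covariance is $p_{ij}p_{ik}p_{jk}p_{il}p_{jl}(1-p_{ij})=\mu_i^3\mu_j^3\mu_k^2\mu_l^2(1+o(1))$. Summing over unordered pairs $\{i,j\}$ and ordered pairs $(k,l)$ of distinct remaining vertices gives $\frac12\sum^{*}\mu_i^3\mu_j^3\mu_k^2\mu_l^2(1+o(1))$. After removing distinctness, the leading term is $\frac12\|\bm\mu\|_3^6\|\bm\mu\|_2^4$.
\end{enumerate}
Adding (i) and (ii) gives $\frac16\|\bm\mu\|_2^4(3\|\bm\mu\|_3^6+\|\bm\mu\|_2^2)$, as in \eqref{VarTn}.

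\textbf{Main obstacle.} The hard step is justifying removal of the distinctness constraint in (ii). The error must be $o$ of the full target $\|\bm\mu\|_2^4(\|\bm\mu\|_3^6+\|\bm\mu\|_2^2)$, not of each piece separately, because $\|\bm\mu\|_3^6$ may be small compared with $\|\bm\mu\|_2^2$. The coincidences are handled as follows.
\begin{itemize}
\item $k=l$: this gives $\|\bm\mu\|_3^6\|\bm\mu\|_4^4=o(\|\bm\mu\|_3^6\|\bm\mu\|_2^4)$.
\item $k=i$ (or a similar case): this gives $\|\bm\mu\|_5^5\|\bm\mu\|_3^3\|\bm\mu\|_2^2$. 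By Cauchy--Schwarz, $\|\bm\mu\|_3^3\le\|\bm\mu\|_2\|\bm\mu\|_4^2$, and $\|\bm\mu\|_5^5\le\mu_{\max}^3\|\bm\mu\|_2^2$. Also $\|\bm\mu\|_4^2\le\mu_{\max}\|\bm\mu\|_2$. Hence the term is $o(\|\bm\mu\|_2^6)$.
\item $i=j$: this cannot occur, since $\{i,j\}$ is an edge.
\end{itemize}
The relative $O(\mu_{\max}^2)$ corrections from the approximation of $p_{ij}$ are uniform and therefore harmless.
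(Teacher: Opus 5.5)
Your route is correct in substance and differs from the paper's in how the variance is organized. The paper centres the edge indicators, $X_{ij}=I_{ij}-p_{ij}$, and expands $T_n$ into $\E[T_n]$ plus a linear, a quadratic and a cubic sum in the $X_{ij}$, which are pairwise uncorrelated. $\V[T_n]$ is then a sum of three nonnegative pieces of sizes roughly $\frac12\|\bm\mu\|_2^4\|\bm\mu\|_3^6$, $\frac12\|\bm\mu\|_2^2\|\bm\mu\|_3^6$ (negligible) and $\frac16\|\bm\mu\|_2^6$.

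You instead sum $\mathrm{Cov}(X_\tau,X_\sigma)$ over pairs of triangles, classified by overlap. The two are exact regroupings of the same quantity. Your diagonal sum $\sum_\tau \E X_\tau(1-\E X_\tau)$ collects the paper's cubic and quadratic pieces together with the $k=l$ part of $\sum_{i<j}c_{ij}^2p_{ij}(1-p_{ij})$. Your one-shared-edge sum is exactly the $k\neq l$ part of that same sum.

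Your version is more elementary and needs no orthogonality argument, at the cost of more index-coincidence bookkeeping. The paper's version shows directly which component dominates, edge-level linear fluctuations or triangle-level cubic ones, according to whether $3\|\bm\mu\|_3^6$ or $\|\bm\mu\|_2^2$ is larger. It also matches the chaos-expansion viewpoint behind the later Malliavin--Stein argument. Your mean computation is essentially the paper's.

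There is one bookkeeping slip to fix. Your bullet ``$i=j$: this cannot occur'' gets the logic backwards. The restricted sum does have $i\neq j$, but the leading term $\frac12\|\bm\mu\|_3^6\|\bm\mu\|_2^4$ comes from $(\sum_i\mu_i^3)^2$, which contains the diagonal $i=j$. Passing from $\sum^{*}$ to the full sum therefore costs an additional $\frac12\|\bm\mu\|_6^6\|\bm\mu\|_2^4$.

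This term is $o(\|\bm\mu\|_2^6)$, since $\|\bm\mu\|_6^6=o(\|\bm\mu\|_2^2)$ by \eqref{normorder}. It is in general \emph{not} $o(\|\bm\mu\|_3^6\|\bm\mu\|_2^4)$: with $\mu_1=n^{-1/100}$ and $\mu_i=n^{-2/5}$ for $i\ge 2$, the sparsity condition holds and $\|\bm\mu\|_6^6/\|\bm\mu\|_3^6\to 1$. So it is exactly the kind of error your ``main obstacle'' remark warns about, and it must be absorbed by the $\|\bm\mu\|_2^6$ part of the target.

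The paper handles this explicitly through $\sum_{i<j}\mu_i^3\mu_j^3=\frac12(\|\bm\mu\|_3^6-\|\bm\mu\|_6^6)$ followed by \eqref{normorder}. With that line added, your argument is complete.
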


\begin{proof}
We first note that, by \eqref{pijmu} and \eqref{eq:condion1}, 
the edge probability $p_{ij}$ satisfies 
\begin{align}\label{eq:pij}
p_{ij} = (1 + o(1)) \mu_i \mu_j, \quad 1\le i\neq j\le n.
\end{align}

For distinct vertices $i,j,k\in[n]$, by definition of the \(\beta\)-model,
the indicators $I_{ij},I_{jk}$ and $I_{ki}$ are independent Bernoulli random variables.  
Using \eqref{eq:Tn1} and \eqref{eq:pij}, the expectation of $T_n$ is equal to
\begin{align}\label{eq:ETn}
\E[{T_n}]= \sum_{1\le i<j<k\le n} p_{ij} p_{jk} p_{ki}=(1+o(1))\sum_{1\le i<j<k\le n} \mu_i^2\mu_j^2\mu_k^2.
\end{align} 
In term of norms of $\bm\mu$, the summation on the right-hand side of \eqref{eq:ETn} can be rewritten as 
\begin{align*}
\sum_{1\le i<j<k\le n} \mu_i^2\mu_j^2\mu_k^2
&=\frac16\sum_{1\le i,j,k\le n} \mu_i^2\mu_j^2\mu_k^2- \frac12\sum_{1\le i,j\le n} \mu_i^2\mu_j^4+\frac13\sum_{i=1}^{n} \mu_i^{6} \notag\\
&=\frac16\|\bm\mu\|_2^6-\frac12\|\bm\mu\|_2^2\|\bm\mu\|_4^4+\frac13\|\bm\mu\|_6^6.
\end{align*}
Thus, combining this with \eqref{normorder} and \eqref{eq:ETn} yields \eqref{ETn}.

We now turn to the variance of $T_n$. To this end, we define
\[
X_{ij} = I_{ij} - p_{ij}, \quad 1\le i\neq j\le n.
\]
Then, for any $1\le i\neq j\le n$, we have that $\E[X_{ij} ]=0$, and by \eqref{eq:pij},
\begin{align}\label{VXij}
\V[X_{ij}]=\E[X_{ij}^2]= p_{ij}(1- p_{ij})=(1 + o(1)) \mu_i \mu_j.
\end{align}
By \eqref{eq:ETn}, we can now reformulate \eqref{eq:Tn1} as
\begin{align}
T_n &= \sum_{1\le i<j<k\le n} (X_{ij} + p_{ij})(X_{jk} + p_{jk})(X_{ki} + p_{ki})  \notag \\
    &= \E[{T_n}]+\sum_{1\le i<j\le n}c_{ij}X_{ij}
       +\sum_{\substack{1\le i<j\le n\\ k\neq i,j}}p_{ij}X_{jk} X_{ki}
       +\sum_{1\le i<j<k\le n}X_{ij} X_{jk} X_{ki}, \label{Eq:TnX}
\end{align}
where, by \eqref{eq:pij},
\[
c_{ij}=\sum_{k\ne i,j}p_{jk} p_{ki}=(1+o(1))\mu_i\mu_j\sum_{k\ne i,j}\mu_k^2=(1+o(1))\|\bm\mu\|_2^2\mu_i\mu_j.
\]
The $1+\binom{n}{2}+(n-2)\binom{n}{2}+\binom{n}{3}$ terms on the right-hand side of \eqref{Eq:TnX} are pairwise uncorrelated. 
By \eqref{eq:pij} and \eqref{VXij}, we have
\begin{align*}%\label{VarTn1}
    \V[T_n]&= \sum_{1\le i<j\le n}c_{ij}^2\E[X_{ij}^2]
                    +\sum_{\substack{1\le i<j\le n\\ k\neq i,j}}p_{ij}^2\E[X_{jk}^2]\E[X_{ki}^2] 
                    +\sum_{1\le i<j<k\le n}\E[X_{ij}^2]\E[X_{jk}^2]\E[X_{ki}^2]\\
    &=(1+o(1)) \left(\|\bm\mu\|_2^4\sum_{1\le i<j\le n} \mu_i^3\mu_j^3+\|\bm\mu\|_2^2\sum_{1\le i<j\le n} \mu_i^3\mu_j^3+\sum_{1\le i<j<k\le n} \mu_i^2\mu_j^2\mu_k^2\right).
\end{align*}
Since $\|\bm\mu\|_2^2=o(\|\bm\mu\|_2^4)$ and
\begin{align*}
\sum_{1\le i<j\le n} \mu_i^3\mu_j^3=\frac12\big(\|\bm\mu\|_3^6-\|\bm\mu\|_6^6\big),
\end{align*}
by \eqref{ETn} and \eqref{eq:ETn} we can proceed  with  
\begin{align*}
  \V[T_n]&=\Big(\frac12\|\bm\mu\|_2^4\big(\|\bm\mu\|_3^6-\|\bm\mu\|_6^6\big)
  +\frac{1}{6}\|\bm\mu\|_2^6\Big)(1+o(1))  \\
                &=\frac16\|\bm\mu\|_2^4\big(3\|\bm\mu\|_3^6-3\|\bm\mu\|_6^6+\|\bm\mu\|_2^2\big)(1+o(1)), 
\end{align*}
which, together with \eqref{normorder}, completes the proof of \eqref{VarTn}.
\end{proof}

An immediate consequence of Proposition \ref{prop:mv} is in the following.

\begin{proposition}
Under the sparsity condition \eqref{eq:condion1},  as $n\to\infty$,
\[
\frac{T_n}{\|\bm\mu\|_2^6}\inprob\frac16,
\]
where $\inprob$ denotes the convergence in probability.
\end{proposition}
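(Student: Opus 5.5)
We will use Chebyshev's inequality together with the asymptotics of Proposition \ref{prop:mv}. The statement reduces to two facts: $\E[T_n]/\|\bm\mu\|_2^6 \to 1/6$, and $\V[T_n]/\|\bm\mu\|_2^{12}\to 0$. The first is exactly \eqref{ETn}.

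For the second, \eqref{VarTn} gives
\[
\frac{\V[T_n]}{\|\bm\mu\|_2^{12}} = \frac{(1+o(1))\big(3\|\bm\mu\|_3^6+\|\bm\mu\|_2^2\big)}{6\|\bm\mu\|_2^{8}}.
\]
The term $\|\bm\mu\|_2^2/\|\bm\mu\|_2^8=\|\bm\mu\|_2^{-6}$ tends to zero because $\|\bm\mu\|_2\to\infty$ by \eqref{eq:condion1}. For the other term, \eqref{normorder} with $s=3$ and $t=2$ gives $\|\bm\mu\|_3^3=o(\|\bm\mu\|_2^2)$. Hence $\|\bm\mu\|_3^6=o(\|\bm\mu\|_2^4)$, and so $\|\bm\mu\|_3^6/\|\bm\mu\|_2^8=o(\|\bm\mu\|_2^{-4})\to0$. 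Thus $\V[T_n]=o(\|\bm\mu\|_2^{12})$.

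Now fix $\varepsilon>0$. Chebyshev's inequality gives
\[
\Prob\Big(\Big|\frac{T_n-\E[T_n]}{\|\bm\mu\|_2^6}\Big|>\varepsilon\Big)\le \frac{\V[T_n]}{\varepsilon^2\|\bm\mu\|_2^{12}}\to0,
\]
so $(T_n-\E[T_n])/\|\bm\mu\|_2^6\inprob 0$. Combining this with $\E[T_n]/\|\bm\mu\|_2^6\to1/6$ yields the claim. There is no real obstacle; the only point needing care is comparing $\|\bm\mu\|_3^6$ with $\|\bm\mu\|_2^8$, which is handled by \eqref{normorder}.
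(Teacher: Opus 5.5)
Your proof is correct and follows essentially the paper's argument: Chebyshev's inequality with the variance asymptotics of Proposition~\ref{prop:mv}, then the mean asymptotics \eqref{ETn}. The only difference is cosmetic: the paper applies Chebyshev to $T_n/\E[T_n]$ and finishes with Slutsky's theorem, whereas you normalize $T_n-\E[T_n]$ by $\|\bm\mu\|_2^6$ directly.
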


\begin{proof}
Applying Proposition \ref{prop:mv} and Chebyshev's inequality, for any $\varepsilon>0$, by \eqref{normorder} we have
\[
\Prob\Big(\Big|\frac{T_n}{\E[T_n]}-1\Big|>\varepsilon\Big)\le \frac{\V[T_n]}{(\varepsilon\E[T_n])^2}
=\frac{6\big(3\|\bm\mu\|_3^6+\|\bm\mu\|_2^2\big)}{\varepsilon^2\|\bm\mu\|_2^8}
(1+o(1))\to 0,
\]
which implies that $T_n/\E[T_n]$ converges to 1 in probability.
Thus, the desired result follows via Slutsky's theorem and \eqref{ETn}. 
\end{proof}

\section{Proofs}

This section is devoted to the proof of our main results stated in Section~\ref{sec:mainresults}. 
We first introduce several auxiliary lemmas essential to our approach.

Our primary tool for establishing the asymptotic normality of triangle counts in the $\beta$-model is the Malliavin-Stein method \cite{nourdin_peccati_2012},
a powerful synthesis of Malliavin calculus \cite{nualart2006} and Stein's method (see, e.g., \cite{barbour2005introduction}). 
This method is particularly effective for analyzing normal approximations when applied to independent
Rademacher or Bernoulli random variables \cite{gesine2010steinrademacher,krokowski2017rademacher}. 
Especially, applying this method, Berry-Esseen bounds for triangle counts in ER random graphs have been established \cite{kai2017discretemalliavin}, 
while non-uniform Berry-Esseen bounds for counts of any fixed subgraph have been further derived in \cite{butzek2024nonuni}. 
For more background and related bounds in the Malliavin--Stein framework, 
see, e.g., \cite{eichelsbacher2023simplified} and references therein.

To state the normal-approximation bound used in our approach (Lemma \ref{lem:Eich} below), 
we introduce, following \cite{eichelsbacher2023simplified}, 
the discrete gradient operator for functionals of independent Bernoulli random variables. 
For our purposes, only the first- and second-order discrete gradients are needed.

Let $m \ge 3$ be an integer, and $\bm{X}=(X_1,\ldots,X_m)$ random vector of independent Bernoulli random variables with
\[
\Prob(X_a = 1)=p_a \quad\text{and}\quad  \Prob(X_a = 0)=q_a,\quad a\in[m],
\]
where $0<p_a<1$ and $q_a=1-p_a$. For a measurable function $f:\{0,1\}^m\to\mathbb{R}$ and $F=f(\bm X)$, define the discrete gradient of $F$ with respect to $X_a$ by
\begin{align} \label{eq:DaF1}
D_aF=\sqrt{p_aq_a}\,\big(f(\bm{X}_a^+)-f(\bm{X}_a^-)\big),
\end{align}
where
\[
\bm{X}_a^+=(X_1,\ldots,X_{a-1},1,X_{a+1},\ldots,X_m),\qquad
\bm{X}_a^-=(X_1,\ldots,X_{a-1},0,X_{a+1},\ldots,X_m).
\]
Using \eqref{eq:DaF1}, one can further define the second-order discrete gradient 
$D_bD_aF:=D_b(D_aF)$. Note that for any $a,b\in [m]$,
\[
D_bD_aF=D_aD_bF.
\]

The following lemma plays an important role in our proof of Theorem \ref{Theorem:main}.

\begin{lemma}
\label{lem:Eich} 
 Let $F=f(\bm{X})$ be a random variable with $\E[F]=0$ and  $\V[F]=1$. Then,
\begin{align*}
	d_{K}(F, \mathcal{N})  \leq C\sum_{k=1}^5 \sqrt{B_k},
\end{align*}
where $\mathcal{N}$ is a standard normal random variable, and
\begin{align*}
B_1&=\sum_{a,b,c\in [m]}\sqrt{\E\big[D_a^2 D_b^2\big]\E\big[D_{ca}^2 D_{cb}^2 \big] }, \\
B_2&=\sum_{a,b,c\in [m]}\frac{1}{p_c q_c}\E\big[D_{ca}^2 D_{cb}^2 \big],  \qquad
B_3  = \sum_{a=1}^m \frac{1}{p_aq_a} \E\big[ D_a^4 \big],  \\
B_4&=\sum_{a,b\in [m]}\frac{1}{p_aq_a}\sqrt{\E\big[D_a^4\big]\E\big[D_{ab}^4\big]}, \qquad  
B_5 =\sum_{a,b\in [m]} \frac{1}{p_aq_a p_b q_b}  \E\big[  D_{ab}^4 \big],
\end{align*}
with $D_a:=D_aF$ and $D_{ab}:=D_aD_bF$.
\end{lemma}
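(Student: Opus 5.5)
Lemma \ref{lem:Eich} is essentially the second-order Poincaré inequality for Bernoulli functionals in the simplified form of \cite{eichelsbacher2023simplified}. I would prove it by combining discrete Malliavin calculus with Stein's method for the Kolmogorov distance, in four steps.

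\emph{Step 1: reduction to Rademacher functionals.} I would first pass to the normalized variables $Y_a=(X_a-p_a)/\sqrt{p_aq_a}$, which are centered with unit variance. On this space the operators $D$, the divergence $\delta$ and the Ornstein--Uhlenbeck generator $L$ (with pseudo-inverse $L^{-1}$) are available, and $D_a$ in \eqref{eq:DaF1} is exactly the Malliavin derivative. Since $\E[F]=0$, we have the representation $F=\delta(-DL^{-1}F)$ and the integration-by-parts formula $\E[F g(F)]=\E[\langle Dg(F),-DL^{-1}F\rangle]$. Because $F$ is discrete, $D_a g(F)$ is not $g'(F)D_aF$. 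I would therefore expand it by Taylor's formula with a remainder controlled by $|D_aF|^2/\sqrt{p_aq_a}$. This is where the weights $1/(p_aq_a)$ in the $B_k$ will come from.

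\emph{Step 2: Stein's method for the Kolmogorov distance.} For $z\in\R$, let $g_z$ be the bounded solution of Stein's equation $g'(w)-wg(w)=\mathbf 1_{\{w\le z\}}-\Prob(\mathcal N\le z)$. Inserting Step 1 into $\E[g_z'(F)-Fg_z(F)]$ yields a main term $\E\big[g_z'(F)\big(1-\langle DF,-DL^{-1}F\rangle\big)\big]$. It is bounded by $\sqrt{\V[\langle DF,-DL^{-1}F\rangle]}$, using $\|g_z'\|_\infty\le1$ and $\E[\langle DF,-DL^{-1}F\rangle]=\V[F]=1$. The remainder involves $\sum_a (p_aq_a)^{-1/2}|D_aF|^2|D_aL^{-1}F|$, together with a term coming from the jump of $g_z'$ at $z$.

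\emph{Step 3: bounding the main term (source of $B_1,B_2$).} I would apply the Poincaré inequality $\V[G]\le\sum_c\E[(D_cG)^2]$ to $G=\langle DF,-DL^{-1}F\rangle$ and expand $D_c$ by the discrete product rule. This produces terms of the form $D_{ca}F\,D_aL^{-1}F$ and $D_aF\,D_{ca}L^{-1}F$, plus a correction $(p_cq_c)^{-1/2}D_{ca}F\,D_{ca}L^{-1}F$. The $L^{-1}$-terms are eliminated by the Mehler formula $-D_aL^{-1}F=\int_0^\infty e^{-t}P_tD_aF\,dt$ and Jensen's inequality, which give $\E[(D_aL^{-1}F)^4]\le\E[(D_aF)^4]$ and similar bounds for second derivatives. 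Squaring, summing over $c$ and applying Cauchy--Schwarz then gives $C(B_1+B_2)$.

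\emph{Step 4: the remainder and the non-smooth test function (source of $B_3,B_4,B_5$).} The Taylor remainder is bounded directly by Cauchy--Schwarz in terms of $\sum_a(p_aq_a)^{-1}\E[D_a^4]$. This is $B_3$, and it is also the place where I would need to check exactly which power of $p_aq_a$ arises, since $B_3$ and $B_4$ carry only $1/(p_aq_a)$. The step I expect to be the main obstacle is the discontinuity of $g_z'$: terms like $\E[\mathbf 1_{\{F\le z\}}D_aF|D_aL^{-1}F|]$ cannot be controlled by $\|g_z''\|_\infty$. Following the approach of \cite{eichelsbacher2023simplified}, I would rewrite such a term as an expectation of the form $\E[F\,\Phi(F)]$ and integrate by parts once more. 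This creates a derivative of the weight $|D_aF|\,|D_aL^{-1}F|$, which brings in second derivatives $D_{ab}$. Applying Cauchy--Schwarz and the Mehler bounds then produces the mixed term $B_4$ and the pure second-order term $B_5$. Finally, taking the supremum over $z$ and collecting the five square roots gives the bound $C\sum_{k=1}^5\sqrt{B_k}$.
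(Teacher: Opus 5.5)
Your Step~3 does not give the $B_1$ and $B_2$ of the statement. After expanding $\E\big[(\sum_a D_{ca}F\,D_aL^{-1}F)^2\big]$ and applying Cauchy--Schwarz, you are left with $\E\big[(D_aL^{-1}F)^2(D_bL^{-1}F)^2\big]$. The single-index bounds you quote, $\E[(D_aL^{-1}F)^4]\le\E[D_a^4]$ and its second-order analogue, only control this by $(\E[D_a^4]\,\E[D_b^4])^{1/2}$. You would therefore get $(\E[D_a^4]\,\E[D_b^4])^{1/4}$ where $B_1$ has $(\E[D_a^2D_b^2])^{1/2}$. Likewise you get $(\E[D_{ca}^2D_{cb}^2])^{1/2}(\E[D_{ca}^4]\,\E[D_{cb}^4])^{1/4}$ where $B_2$ has $\E[D_{ca}^2D_{cb}^2]$. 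That is strictly weaker than the lemma.

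The pointwise inequality $\E[(D_aL^{-1}F)^2(D_bL^{-1}F)^2]\le\E[D_a^2D_b^2]$ that would close the gap is false; Jensen cannot give it because $(x,y)\mapsto x^2y^2$ is not convex. Take $p_a=\frac12$, $Y_a=2X_a-1$ and $F=\frac12\big(Y_1(1+Y_3)+Y_2(1-Y_3)\big)$, which is centred with unit variance. Then $D_1F\,D_2F\equiv0$. On the other hand $L^{-1}F=-\frac12\big(Y_1+Y_2+\frac12Y_1Y_3-\frac12Y_2Y_3\big)$ gives $D_1L^{-1}F\,D_2L^{-1}F\equiv\frac{3}{16}$, so the left side is $\frac{9}{256}$ while the right side vanishes.

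The missing idea is to use Mehler's formula in coupling form, $P_tG(\bm X)=\E[G(\bm X^{(t)})\mid\bm X]$ with $\bm X^{(t)}$ equal in law to $\bm X$, and to apply it to the whole sum. For $\tau\sim\mathrm{Exp}(1)$ independent, $\sum_aD_{ca}F\,(-D_aL^{-1}F)=\E\big[\sum_aD_{ca}F(\bm X)\,D_aF(\bm X^{(\tau)})\,\big|\,\bm X\big]$. Apply conditional Jensen to the square of the sum, then Cauchy--Schwarz under the joint law of $(\bm X,\bm X^{(\tau)})$; this gives exactly $\sum_{a,b}\sqrt{\E[D_{ca}^2D_{cb}^2]\,\E[D_a^2D_b^2]}$. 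The other two terms of the discrete product rule go the same way, and the correction term produces $(p_cq_c)^{-1}\sum_{a,b}\E[D_{ca}^2D_{cb}^2]$.

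There is a second, smaller problem in Steps~1 and~4. For the Kolmogorov Stein solution, the continuous part of $g_z'$ is $w\mapsto wg_z(w)$, whose one-sided derivative at $w=z$ is of order $|z|$ for large $|z|$. So the Taylor remainder is not bounded by $C|D_aF|^2/\sqrt{p_aq_a}$ uniformly in $z$. The honest estimate carries a factor $C+|F|$ and produces $\E\big[|F|\sum_a(p_aq_a)^{-1/2}D_a^2|D_aL^{-1}F|\big]$, which is not among $B_1,\dots,B_5$.

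The fix is to treat this part exactly like the jump. Both $\psi(w)=wg_z(w)$ and $\psi(w)=\mathbf 1_{\{w>z\}}$ are nondecreasing with $|\psi|\le1$, so $D_a\psi(F)\,D_aF\ge0$. Hence each piece of the remainder is bounded by $\E\langle D\psi(F),u\rangle=\E[\psi(F)\,\delta(u)]\le\E|\delta(u)|$, where $u_a=(p_aq_a)^{-1/2}D_aF\,|D_aL^{-1}F|$. This duality, not a rewriting as $\E[F\Phi(F)]$, is the ``second integration by parts''. It is also why the paper, which simply applies Theorem~4.1(i) of \cite{eichelsbacher2023simplified} to $Y_a=2X_a-1$, checks that $u\in\mathrm{Dom}(\delta)$; this is automatic for finitely many variables.

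Finally, $\E[\delta(u)^2]\le\E\|u\|^2+\E\sum_{a,b}(D_bu_a)^2$. The term $\E\|u\|^2$ gives $B_3$, and the terms $D_bu_a$ give $B_4$ and $B_5$. Here, and only here, your single-index fourth-moment Mehler bounds are sufficient.
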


\begin{proof}
This lemma can follow directly by applying Theorem 4.1(i) of \cite{eichelsbacher2023simplified} to the independent Rademacher random variables $\{Y_a=2X_a-1: a\in [m]\}$.
We only need to verify  condition~(3.3) in \cite{eichelsbacher2023simplified} and that
$u: a \mapsto (p_a q_a)^{-1/2} D_a F \, \big| D_a L^{-1} F \big|$ belongs to $\operatorname{Dom}(\delta)$,
where $\operatorname{Dom}(\delta)$ and the operator $L$ are defined 
as in Section~2 of~\cite{eichelsbacher2023simplified}.

 Note that $u$ depends only on 
finitely many independent Rademacher random variables $\{Y_a, a\in [m]\}$. 
Condition (3.3) in \cite{eichelsbacher2023simplified} is satisfied by Remark 3.2 (ii) of the same reference. Moreover, by the Wiener–It\^o–Walsh decomposition, the chaos expansion of $u$ contains only finitely many terms (see, for instance, \cite{Privault2008StochasticAO}, p. 444). Consequently, condition (2.14) in \cite{kai2017discretemalliavin} is satisfied, which implies  $u \in \operatorname{Dom}(\delta)$.
\end{proof}

To apply Lemma \ref{lem:Eich}, 
we denote the set of all possible edges 
in the $\beta$-model on $n$ vertices by $\{e_1,e_2,\dots,e_m\}$, where $m=\binom{n}{2}$. 
For each edge $e_a$ with $a\in[m]$, let $1\le i_a<j_a\le n$ denote its endpoints. 
Note that the edge indicators $\{I_{i_a j_a}, a\in [m]\}$ 
are independent Bernoulli random variables with 
\[
\Prob(I_{i_a j_a}=1)=p_{i_a j_a},\quad \Prob(I_{i_a j_a}=0)=1-p_{i_a j_a}.
\] 
Hence, the normalized triangle count $F_n$ in \eqref{DefFn} is a measurable function of these variables
and is thus amenable to analysis via Lemma~\ref{lem:Eich}.
 
By \eqref{eq:DaF1}, we have
\begin{align}\label{DaFn}
   D_a:= D_aF_n=\frac{\sqrt{h_a}}{\sqrt{\V[T_n]}}V_a,\quad a\in[m],
\end{align}
where 
\begin{equation}\label{def:h}
h_a=p_{i_aj_a}(1-p_{i_aj_a}),    
\end{equation}
and 
\begin{align}
\label{eq:Va}
  V_a = \sum_{k\neq i_a,j_a} I_{i_ak}I_{j_ak}.
\end{align}
In other words, $V_a$ counts the number of wedges (i.e., paths of length two) 
with endpoints $i_a$ and $j_a$. 
Graphically, the gradient $D_aF_n$ measures the sensitivity of triangle counts to flipping the status of $e_a$.

For the second-order gradient, consider any two given edges $e_a=\{i_a,j_a\}$ and $e_b=\{i_b,j_b\}$.
By definition, one can immediately obtain that $D_{ab}=0$ if either $e_a$ and $e_b$ are an identical edge or 
disjoint (i.e., the intersection $\{i_a,j_a\} \cap \{i_b,j_b\}$ is the empty set $\varnothing$).
If $|\{i_a,j_a\}\cap\{i_b,j_b\}|=1$, then \eqref{eq:DaF1} implies 
\begin{equation}\label{eq:DbVa}
D_bV_a=\sqrt{h_b}\,I_{\ell_1\ell_2},
\end{equation}
where
\begin{equation*}
\{\ell_1,\ell_2\}=(\{i_a,j_a\}\cup\{i_b,j_b\})\setminus(\{i_a,j_a\}\cap\{i_b,j_b\}),
\end{equation*}
i.e., $\ell_1$ and $\ell_2$ are the endpoints of $e_a$ and $e_b$ excluding their common vertex.
Consequently, it follows by \eqref{DaFn} and \eqref{eq:DbVa} that, for any $a,b\in [m]$,
\begin{align} \label{eq:Dab1}
D_{ab}:=D_aD_bF_n=D_bD_aF_n=\frac{\sqrt{h_ah_b}}{\sqrt{\V[T_n]}}\Delta_{ab},
\end{align}
with
 \begin{align}\label{defDelataab}
\Delta_{ab}=\left\{\begin{array}{cl} 
0,& \mbox{if~} \{i_a,j_a\}=\{i_b,j_b\} \mbox{~or~} \{i_a,j_a\}\cap\{i_b,j_b\}=\varnothing;\\
I_{\ell_1\ell_2}, & \mbox{if~} |\{i_a,j_a\}\cap\{i_b,j_b\}|=1. \end{array}\right.
\end{align}

In addition to Lemma \ref{lem:Eich}, we further require the following auxiliary lemmas.

\begin{lemma} \label{EDbT}
For any $s\ge2$, we have
\begin{align*}
\E[V_a^{s}] \le C \big( \|\bm\mu\|_2^2\mu_{i_a} \mu_{j_a} + \|\bm\mu\|_2^{2s}\mu_{i_a}^s \mu_{j_a}^s  \big),  
\end{align*}	
where $C=C(s)>0$ is a constant depending only on $s$. 
\end{lemma}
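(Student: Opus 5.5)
Write $V_a=\sum_{k\neq i_a,j_a}Y_k$ with $Y_k=I_{i_ak}I_{j_ak}$. For fixed $a$ these summands are independent Bernoulli variables, because distinct $k$ involve disjoint pairs of edges. The success probabilities are
\[
q_k:=p_{i_ak}p_{j_ak}=(1+o(1))\,\mu_{i_a}\mu_{j_a}\mu_k^2,
\]
by \eqref{eq:pij}, and in any case $p_{ij}\le\mu_i\mu_j$ exactly from \eqref{pijmu}. Hence $V_a$ is a sum of independent Bernoulli variables with mean
\[
\lambda:=\E[V_a]\le \mu_{i_a}\mu_{j_a}\|\bm\mu\|_2^2 .
\]
So the lemma reduces to a standard moment bound for sums of independent indicators: $\E[V^s]\le C(s)(\lambda+\lambda^s)$. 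Plugging in $\lambda\le \|\bm\mu\|_2^2\mu_{i_a}\mu_{j_a}$ then gives exactly the stated bound, since $x\mapsto x+x^s$ is increasing.

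To prove $\E[V^s]\le C(s)(\lambda+\lambda^s)$, first take $s$ to be an integer. Expand $V^s=\sum_{k_1,\dots,k_s}Y_{k_1}\cdots Y_{k_s}$ and group the terms according to the set partition of the index positions induced by equal indices. Since $Y_k^r=Y_k$, a term with $r$ distinct indices has expectation $\prod q_{k_\cdot}$ over those $r$ distinct indices. Summing over distinct indices gives at most $\lambda^r$, so
\[
\E[V^s]\le \sum_{r=1}^s S(s,r)\lambda^r,
\]
where $S(s,r)$ are Stirling numbers. Each $\lambda^r$ with $1\le r\le s$ is at most $\lambda+\lambda^s$: if $\lambda\le1$ then $\lambda^r\le\lambda$, and otherwise $\lambda^r\le\lambda^s$. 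This proves the integer case with $C(s)$ equal to a Bell number.

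For non-integer $s\ge2$, interpolate. Let $s_0=\lfloor s\rfloor$ and $s_1=s_0+1$, and write $s=(1-\theta)s_0+\theta s_1$. By Hölder,
\[
\E[V^s]\le \E[V^{s_0}]^{1-\theta}\,\E[V^{s_1}]^{\theta}\le C\,(\lambda+\lambda^{s_0})^{1-\theta}(\lambda+\lambda^{s_1})^{\theta}.
\]
If $\lambda\le1$ both factors are at most $2\lambda$, giving $\le C\lambda$. If $\lambda>1$ they are at most $2\lambda^{s_0}$ and $2\lambda^{s_1}$, giving $\le C\lambda^{s}$.

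Alternatively, one can bypass Hölder by using $V^s\le V^{\lceil s\rceil}$, which holds because $V$ is a nonnegative integer. This yields $C(\lambda+\lambda^{\lceil s\rceil})$, which is too weak when $\lambda>1$, so the interpolation step is the one to use.

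The main subtlety is not analytic but combinatorial bookkeeping. We must check that the $Y_k$ are genuinely independent: the edges $\{i_a,k\},\{j_a,k\}$ for different $k$ are distinct, and none of them equals $e_a$. We must also use the exact inequality $p_{ij}\le\mu_i\mu_j$, so that the constant depends only on $s$ and not on $n$ or on the $o(1)$ terms.
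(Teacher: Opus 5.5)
Your proof is correct, but it follows a different route from the paper.

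\textbf{The paper's route.} It writes $V_a$ as its centered part plus its mean and uses $\E|X+Y|^s\le 2^{s-1}(\E|X|^s+\E|Y|^s)$. It then applies Rosenthal's inequality to the centered sum of the independent indicators $I_{i_ak}I_{j_ak}$, bounding their absolute central $s$-th moments and variances by $p_{i_ak}p_{j_ak}$. This gives $\E[V_a^s]\le C(\lambda+\lambda^{s/2}+\lambda^s)$. The middle term is then absorbed through $x^{s/2}\le x+x^s$.

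\textbf{Your route.} You avoid centering and Rosenthal entirely. For integer $s$, your set-partition expansion is the identity $\E[V^s]=\sum_{r}S(s,r)\,\E[(V)_r]$, where $(V)_r=V(V-1)\cdots(V-r+1)$ is the falling factorial and $\E[(V)_r]\le\lambda^r$. You then reach real $s$ through log-convexity of moments, using H\"older between $\lfloor s\rfloor$ and $\lfloor s\rfloor+1$. Your case split $\lambda\le 1$ versus $\lambda>1$ there is handled correctly.

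\textbf{What each buys.} The paper's argument covers all real $s\ge 2$ in one stroke, at the cost of a heavier inequality with an unspecified constant. Yours is fully elementary and gives an explicit constant for integer $s$. For example, $C(4)=15$ (the Bell number), and $s=4$ is the only case the paper actually uses, in $\widetilde B_1$, $\widetilde B_3$ and $\widetilde B_4$. The interpolation step is only needed for the general statement. You also invoke the exact bound $p_{ij}\le\mu_i\mu_j$ rather than the asymptotic relation \eqref{eq:pij}. This is a slightly cleaner way to see that $C$ does not depend on $n$.
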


\begin{proof}
We begin with the inequality
\begin{equation*}
\E[|X+Y|^s]\le 2^{s-1}\E[|X|^s+|Y|^s], 
\end{equation*}
which holds for any random variables $X$ and $Y$. 
Applying this to $V_a$ given in \eqref{eq:Va}, we have
\begin{align}\label{EVa4ine}
\E[V_a^{s}] &  \le C\E\Big[\Big|\sum_{k\neq i_a,j_a} (I_{i_ak}I_{j_ak}-p_{i_ak}p_{j_ak})\Big|^s+\Big(\sum_{k\neq i_a,j_a} p_{i_ak}p_{j_ak}\Big)^s\Big].
\end{align}
To bound the first term on the right-hand side of \eqref{EVa4ine},
observe that $I_{i_ak}I_{j_ak}$ are independent Bernoulli variables with the success rate
$p_{i_ak}p_{j_ak}$ for $k\neq i_a,j_a$. 
By Rosenthal's inequality we have 
\begin{align*}
\E\Big|\sum_{k\neq i_a,j_a} (I_{i_ak}I_{j_ak}-p_{i_ak}p_{j_ak})\Big|^s
&\le C\Big[\sum_{k\neq i_a,j_a}\E|I_{i_ak}I_{j_ak}-p_{i_ak}p_{j_ak}|^{s}\\
&\quad+\Big(\sum_{k\neq i_a,j_a}\V[I_{i_ak}I_{j_ak}]\Big)^{s/2}\Big]\\
&\le C\Big[\sum_{k\neq i_a,j_a}p_{i_ak}p_{j_ak} +
\Big(\sum_{k\neq i_a,j_a} p_{i_ak}p_{j_ak}\Big)^{s/2}\Big].
\end{align*}
%Since 
%\[
%\E|I_{ik}I_{jk}-p_{ik}p_{jk}|^{s}=(1-p_{ik}p_{jk})^sp_{ik}p_{jk}+(1-p_{ik}p_{jk})p_{ik}^sp_{jk}^s
%\le 2p_{ik}p_{jk},
%\]
%and $\V[I_{ik}I_{jk}]\le p_{ik}p_{jk}$ for distinct $i,j,k$,
%it follows that
%\begin{align*}
%\E\Big|\sum_{k\neq i_a,j_a} (I_{i_a,k}I_{j_a,k}-p_{i_ak}p_{j_ak})\Big|^s
%\le  C\Big[\sum_{k\neq i_a,j_a}p_{i_ak}p_{j_ak} +
%\Big(\sum_{k\neq i_a,j_a} p_{i_a,k}p_{j_a,k}\Big)^{s/2}\Big].
%\end{align*}
Substituting this into \eqref{EVa4ine} and using the inequality 
$x^{s/2}\le x+x^s$ for $x>0$ and $s\ge2$,
by \eqref{eq:pij} we thus have 
\begin{align*}
\E[V_a^{s}] 
   &\le C\Big[\sum_{k\neq i_a,j_a} p_{i_ak}p_{j_ak}+\Big(\sum_{k\neq i_a,j_a} p_{i_ak}p_{j_ak}\Big)^{s}\Big] \notag \\ 
   &\le C\Big[\mu_{i_a}\mu_{j_a}\sum_{k\neq i_a,j_a} \mu_k^2+\Big(\mu_{i_a}\mu_{j_a}\sum_{k\neq i_a,j_a} \mu_k^2\Big)^{s}\Big] \notag \\
	& \le C\big(\|\bm\mu\|_2^2\mu_{i_a} \mu_{j_a} +\|\bm\mu\|_2^{2s}\mu_{i_a}^s \mu_{j_a}^s \big),
\end{align*}
which completes the proof of Lemma \ref{EDbT}. 
\end{proof}

The following inequality is used repeatedly in our analysis. 
\begin{lemma}\label{normsCS}
Let \(\bm{x} = (x_1, x_2, \dots, x_n)\) be a positive vector 
(i.e.,  $x_i>0$ for all $i\in [n]$). For any $s,t>0$, we have
\[
\|\bm {x}\|_{\frac{s+t}{2}}^{s+t} \le   \|\bm{x}\|_s^s \|\bm{x}\|_t^t.
\]
\end{lemma}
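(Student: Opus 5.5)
The inequality is a direct consequence of the Cauchy--Schwarz inequality applied to the sequences $(x_i^{s/2})_{i\in[n]}$ and $(x_i^{t/2})_{i\in[n]}$. First I rewrite the left-hand side explicitly as a sum. By the definition of the $L_r$-norm with $r=\frac{s+t}{2}$,
\[
\|\bm{x}\|_{\frac{s+t}{2}}^{s+t}=\Big(\sum_{i=1}^n x_i^{\frac{s+t}{2}}\Big)^{2}.
\]
Positivity of the entries makes all powers well defined and removes the absolute values.

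Next, I factor each summand as $x_i^{\frac{s+t}{2}}=x_i^{s/2}\,x_i^{t/2}$ and apply Cauchy--Schwarz:
\[
\Big(\sum_{i=1}^n x_i^{s/2}x_i^{t/2}\Big)^2\le \Big(\sum_{i=1}^n x_i^{s}\Big)\Big(\sum_{i=1}^n x_i^{t}\Big)=\|\bm{x}\|_s^s\,\|\bm{x}\|_t^t.
\]
This gives exactly the claimed bound. The only point needing attention is the exponent bookkeeping: the outer power $s+t$ on the norm turns the $\frac{s+t}{2}$-norm into a squared sum. No step is a real obstacle.

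Equivalently, one can view the statement as log-convexity of $r\mapsto\|\bm x\|_r^r$, which is Hölder's inequality with exponents $(2,2)$. In the later estimates this lemma will presumably be applied with $\bm x=\bm\mu$ to sums such as $\sum_i\mu_i^{(s+t)/2}$, in order to split them into products of the norms appearing in $A_1,\dots,A_5$.
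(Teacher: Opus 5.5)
Your proposal is correct and is exactly the paper's argument: the paper simply states that the inequality follows directly from Cauchy--Schwarz, and you carry this out by writing $\|\bm x\|_{\frac{s+t}{2}}^{s+t}=\big(\sum_i x_i^{s/2}x_i^{t/2}\big)^2$ and applying Cauchy--Schwarz to the sequences $(x_i^{s/2})$ and $(x_i^{t/2})$. Your exponent bookkeeping is right, so nothing needs to be added.
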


\begin{proof}
The inequality follows directly from the Cauchy-Schwarz inequality.    
\end{proof}

With the above preparation, we proceed to prove Theorem \ref{Theorem:main} below.

\begin{proof}[Proof of Theorem \ref{Theorem:main}]
Recall the second-order discrete gradients $D_{ab}$ given in \eqref{eq:Dab1} for all $a,b\in[m]$,
where $m=\binom{n}{2}$ represents the number of all possible edges in the $\beta$-model on $n$ vertices.
Observe that $\Delta_{ab}^s=\Delta_{ab}=\Delta_{ba}$ for all $s>0$, and $\Delta_{ca}$ and $\Delta_{cb}$
are independent for distinct edges $e_a,e_b,e_c$. 
Combining Lemma  \ref{lem:Eich} with \eqref{DaFn} and \eqref{eq:Dab1}, we obtain that
 \begin{align}\label{DKFn}
	d_{K}(F_n, \mathcal{N})  \leq \frac{C}{\V[T_n]}\sum_{k=1}^5 \sqrt{\widetilde B_k},
\end{align}
where 
\begin{align*}
\widetilde B_1:&=  \sum_{a,b,c\in [m]} h_ah_bh_c\sqrt{  \E\big[V_a^2 V_b^2 \big]  \E[\Delta_{ca}\Delta_{cb}] }, \\
\widetilde B_2:&=\sum_{a,b,c\in [m]} h_ah_bh_c\E[\Delta_{ca}\Delta_{cb}],\qquad
\widetilde B_3:= \sum_{a=1}^m h_a\E\big[ V_a^4 \big],\\
\widetilde B_4:&=\sum_{a,b\in [m]} h_ah_b\sqrt{\E\big[ V_a^4\big]\E[\Delta_{ab}]},\qquad 
\widetilde B_5:= \sum_{a,b\in [m]} h_ah_b \E[\Delta_{ab}].
\end{align*}
To prove Theorem \ref{Theorem:main}, by \eqref{VarTn} and \eqref{DKFn}
it suffices to show that
\[
\sum_{k=1}^5 \sqrt{\widetilde B_k}\le C\|\bm\mu\|_{2}^{\frac32}\sum_{\ell=1}^5A_{\ell},
\]
where $A_{\ell}>0\,(\ell=1,2,\dots, 5)$ are defined in \eqref{def:Aell}.
Applying the Cauchy–Schwarz inequality, a sufficient condition for this to hold is
\begin{align} \label{B_K}
\sum_{k=1}^5\widetilde B_k&\le C\|\bm\mu\|_{2}^3\sum_{\ell=1}^5A_{\ell}^2.
\end{align}
Thus, the remainder of the proof is devoted to verifying \eqref{B_K}.

We now analyze the terms \(\widetilde{B}_k\) individually, beginning with \(\widetilde B_3\).
From \eqref{eq:pij} and \eqref{def:h}, we deduce that
\begin{align*}
h_a=(1 + o(1)) \mu_{i_a} \mu_{j_a}, \quad a\in [m].
\end{align*}
Applying Lemma \ref{EDbT}  with \(s = 4\) yields %we thus have
\[
\widetilde B_3\le C \sum_{a=1}^m \big(\|\bm\mu\|_2^2\mu_{i_a}^2\mu_{j_a}^2 +\|\bm\mu\|_2^{8}\mu_{i_a}^5 \mu_{j_a}^5\big).
\]
Since for any $t>0$, 
\[
\sum_{a=1}^m \mu_{i_a}^t\mu_{j_a}^t
=\sum_{1\le i<j\le n}\mu_i^t\mu_j^t
=\frac12\Big[\Big(\sum_{i=1}^n\mu_i^t\Big)^2-\sum_{i=1}^n\mu_i^{2t}\Big]
\le \frac12\|\bm\mu\|_t^{2t},
\]
we have
\begin{align}\label{B3}
\widetilde B_3\le C  \big(\|\bm\mu\|_2^6+\|\bm\mu\|_2^8\|\bm\mu\|_5^{10}\big)
              =C\|\bm\mu\|_2^3(\|\bm\mu\|_2^3+A_3^2).
\end{align}

Next, consider \(\widetilde{B}_5\), 
which involves pairs of edges \(e_a=(i_a,j_a)\) and \(e_b=(i_a,j_b)\) sharing a common vertex \(i_a\).
%Summing over all such configurations is equivalent to summing over triples of distinct vertices \((i,j,k)\).
By \eqref{eq:pij} and \eqref{defDelataab}, we have
\begin{align} \label{B5}
\widetilde{B}_5 &=\sum_{i_a=1}^n\sum_{j_a\neq i_a}
\sum_{j_b\neq i_a,j_a}p_{i_aj_b}(1-p_{i_aj_a})p_{i_aj_b}(1-p_{i_aj_b})p_{j_aj_b}\notag\\
&=\sum_{i=1}^n\sum_{j\neq i}\sum_{k\neq i,j}p_{ij}(1-p_{ij})p_{ik}(1-p_{ik})p_{jk}\notag\\
&\le \sum_{i=1}^n\sum_{j\neq i}\sum_{k\neq i,j}p_{ij}p_{ik}p_{jk}\notag\\
&\le  \sum_{i,j,k\in[n]}\mu_i^2\mu_j^2\mu_k^2 = \|\bm\mu\|_2^6,
\end{align}
where in the second equality we relabeled the indices $i_a,j_a, j_b$ as $i,j, k$ for simplicity,
and in the last inequality we applied the bound $p_{ij}\le \mu_{i}\mu_j$.

Invoking  Lemma \ref{normsCS} with \(s = \frac{5}{2}\) and \(t = \frac{3}{2}\) gives
\[
\|\bm\mu\|_2^4 \le \|\bm\mu\|_{\frac{3}{2}} ^ {\frac{3}{2}} \|\bm\mu\|_{\frac{5}{2}}^{\frac{5}{2}}=A_1^2.
\]
Combining this with \eqref{B3} and \eqref{B5}, 
and noting that $\|\bm\mu\|_2^3=o(\|\bm\mu\|_2^4)$ under our assumptions, 
we have
\begin{align}\label{tildeB35}
\widetilde B_3+\widetilde B_5\le C \|\bm\mu\|_{2}^3(A_1^2+A_3^2).
\end{align}

For $\widetilde{B}_4$, Lemma \ref{EDbT} with $s=4$ provides
\begin{align}\label{sqrtEva4}
\sqrt{\E[V_a^{4}]} \le C \left( \|\bm\mu\|_2 \mu_{i_a}^{\frac{1}{2}}  \mu_{j_a}^{\frac{1}{2}} + \|\bm\mu\|_2^{4}\mu_{i_a}^ 2\mu_{j_a}^2\right).
\end{align}
Following the approach for $\widetilde{B}_5$,  
by \eqref{sqrtEva4} we bound $\widetilde{B}_4$ as
\begin{align}\label{tildeB4}
   \widetilde{B}_4 &\le C\sum_{i=1}^n\sum_{j\neq i}\sum_{k\neq i,j}p_{ij}p_{ik}
   \left( \|\bm\mu\|_2 \mu_{i}^{\frac{1}{2}}  \mu_{j}^{\frac{1}{2}} + \|\bm\mu\|_2^{4}\mu_{i}^ 2\mu_{j}^2\right)p_{jk}^{\frac12}\notag\\
   &\le C\sum_{i,j,k\in[n]}\left( \|\bm\mu\|_2 \mu_{i}^{\frac{5}{2}}\mu_{j}^{2} \mu_{k}^{\frac{3}{2}}+ \|\bm\mu\|_2^{4}\mu_{i}^{4}\mu_{j}^{\frac{7}{2}}
   \mu_{k}^{\frac{3}{2}}\right)\notag\\
   &=C\|\bm\mu\|_{2}^{3}(A_1^2+A_2^2),
\end{align}
which, together with \eqref{tildeB35}, implies that
\begin{align}\label{tildeB345}
\widetilde B_3+\widetilde B_4+\widetilde B_5\le 
 C \|\bm\mu\|_{2}^3(A_1^2+A_2^2+A_3^2).
\end{align}

We now turn to $\widetilde{B}_2$. By distinguishing cases where
 edges $e_a$ and $e_b$ are identical or distinct,
we decompose this term as
\begin{align}\label{tildeB20}
\widetilde B_2 &= \sum_{a,c\in [m]} h^2_ah_c  \E[\Delta_{ca}] + 
\sum_{\{a,b,c\}\subset[m]} h_ah_bh_c\E[\Delta_{ca}] \E[\Delta_{cb}]\notag\\
&=:\widetilde B_{21}+\widetilde B_{22},
\end{align}
where the simple fact $\E[\Delta_{ca}^2]=\E[\Delta_{ca}]$ is used.

For $\widetilde{B}_{21}$, analogous to the analysis of $\widetilde{B}_5$ and $\widetilde{B}_4$, we obtain that
\begin{align}\label{tildeB21}
\widetilde{B}_{21}&=\sum_{i=1}^n\sum_{j\neq i}\sum_{k\neq i,j}p_{ij}^2(1-p_{ij})^2p_{ik}(1-p_{ik})p_{jk}\notag\\
&\le \sum_{i,j,k\in[n]}p_{ij}^2p_{ik}p_{jk}\notag\\
&\le \sum_{i,j,k\in[n]}\mu_{i}^3\mu_{j}^3\mu_k^2 =\|\bm\mu\|_2^2\|\bm\mu\|_3^6.
\end{align}

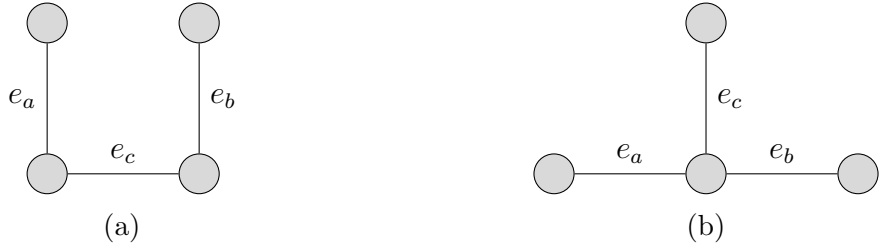
\begin{figure} 
  \centering
  \begin{subfigure}[b]{0.5\textwidth}
    \centering
    \begin{tikzpicture}
  % Nodes style
   \tikzset{vertex/.style={circle, draw, fill=gray!30, inner sep=0pt, minimum size=15pt}} 
  % Nodes
  \node[vertex] (l) at (1,2) {};
  \node[vertex] (i) at (1,0) {};
  \node[vertex] (j) at (3,0) {};
  \node[vertex] (k) at (3,2) {};
  % Edges
  \draw (l) -- node[left] {$e_a$} (i);
  \draw (i) -- node[above] {$e_c$} (j);
  \draw (j) -- node[right] {$e_b$} (k);
\end{tikzpicture}
 \caption{}
  \end{subfigure}\hspace{-1em}
  \begin{subfigure}[b]{0.5\textwidth}
    \centering
    \begin{tikzpicture}
  % Nodes style
   \tikzset{vertex/.style={circle, draw, fill=gray!30, inner sep=0pt, minimum size=15pt}} 
  % Nodes
  \node[vertex] (i) at (1,2) {};%$i_c$
  \node[vertex] (j) at (1,0) {};%$j_c$};
  \node[vertex] (k) at (3,0) {};%$k$
  \node[vertex] (l) at (-1,0) {};%$l$
  % Edges
  \draw (i) -- node[auto] {$e_c$} (j);
  \draw (j) -- node[above] {$e_b$} (k);
  \draw (j) -- node[above] {$e_a$} (l);
\end{tikzpicture}
    \caption{}
  \end{subfigure}
\caption{Edge $e_c$ shares a common vertex with two other edges $e_a$ and $e_b$.} 
\label{nodes}
\end{figure}

To bound $\widetilde{B}_{22}$, 
we only need to consider triples of edges $(e_a,e_b,e_c)$
where $e_c$ shares a common vertex with both $e_a$ and $e_b$.
Two configurations exist: either $e_c$ connects to $e_a$ and $e_b$ via distinct common vertices
(Figure \ref{nodes}(a)), or all three edges share a unique common vertex (Figure \ref{nodes}(b)). 
This yields that
\begin{align}\label{tildeB22}
\widetilde{B}_{22}&=\sum_{1\le i\ne j\le n}\sum_{k\neq i,j}\sum_{\ell\neq i,j,k} 
\big(p_{ij}(1-p_{ij})p_{ik}(1-p_{ik})p_{j\ell}(1-p_{j\ell})p_{i\ell}p_{jk}\notag\\
&\quad+p_{ij}(1-p_{ij})p_{ik}(1-p_{ik})p_{i\ell}(1-p_{i\ell})p_{jk}p_{j\ell}\big)\notag\\
&\le 2\sum_{i,j,k,\ell\in[n]}p_{ij}p_{ik}p_{j\ell}p_{i\ell}p_{jk}\notag\\
&\le 2 \sum_{i,j,k,\ell\in[n]} \mu_{i}^3\mu_{j}^3\mu_k^2\mu_{\ell}^2
=2\|\bm\mu\|_2^4\|\bm\mu\|_3^6.
\end{align}

Substituting \eqref{tildeB21} and \eqref{tildeB22} into \eqref{tildeB20} and applying the sparsity condition \eqref{eq:condion1}, we obtain
\begin{align} \label{B2}
    \widetilde B_2\le C  \big(\|\bm\mu\|_2^2\|\bm\mu\|_3^6+\|\bm\mu\|_2^4\|\bm\mu\|_3^6\big) \le C \|\bm\mu\|_2^4\|\bm\mu\|_3^6.
\end{align}
Invoking Lemma \ref{normsCS} with \(s = 2\) and \(t = 4\) gives
\[
\|\bm\mu\|_3^6 \le \|\bm\mu\|_{2} ^2 \|\bm\mu\|_4^4.
\]
Utilizing the following asymptotic relations
\[
\|\bm\mu\|_2=o(\|\bm\mu\|_2^2), \quad 
\|\bm\mu\|_4^4=o\Big(\|\bm\mu\|_{\frac{7}{2}}^{\frac{7}{2}}\Big),\quad
\|\bm\mu\|_2^4=o\Big(\|\bm\mu\|_{\frac{7}{4}}^{\frac{7}{2}}\Big),
\]
we have
\begin{align*}
\|\bm\mu\|_2\|\bm\mu\|_3^6\le \|\bm\mu\|_{2} ^3 \|\bm\mu\|_4^4
=o\Big(\|\bm\mu\|_{2} ^4\|\bm\mu\|_{\frac{7}{2}}^{\frac{7}{2}}\Big)
=o\big(A_5^2\big).
\end{align*}
Thus, from \eqref{B2} we can conclude that
\begin{align}\label{tildeB2}
\widetilde B_2= o\big(\|\bm\mu\|_{2}^3A_5^2\big).
\end{align}

Consider $\widetilde{B}_1 $.
Following a similar decomposition to \eqref{tildeB20}, we can rewrite 
\begin{align} \label{DecB1}
\widetilde{B}_1 & = \sum_{a,c\in [m]} h^2_a h_c\sqrt{\E[V_a^4]  \E[\Delta_{ca}]} 
                   +\sum_{\{a,b,c\}\subset[m]}h_ah_bh_c\sqrt{\E[V_a^2 V_b^2]\E[\Delta_{ca}]\E[\Delta_{cb}] }\notag\\
                &=: \widetilde{B}_{11}+\widetilde{B}_{12}.
\end{align}
Comparing $\widetilde{B}_{11}$ with the expression of $\widetilde{B}_4$, 
by \eqref{tildeB4} we obtain that 
\begin{align}\label{tildeB11}
\widetilde{B}_{11}
=o(\widetilde{B}_4)=o\big(\|\bm\mu\|_{2}^{3}\big(A_1^2+A_2^2\big)\big).
\end{align}
For $\widetilde{B}_{12}$, using Cauchy-Schwarz inequality and \eqref{sqrtEva4} gives
\begin{align*} 
\widetilde{B}_{12}&\le \sum_{\{a,b,c\}\subset[m]}h_ah_bh_c(\E[V_a^4])^{\frac{1}{4}} (\E [V_b^4])^{\frac{1}{4}}\sqrt{\E[\Delta_{ca}]
\E[\Delta_{cb} ] }\\
&\le C \|\bm\mu\|_2\sum_{\{a,b,c\}\subset[m]}h_ah_bh_c\left(\mu_{i_a}^{\frac{1}{4}}  \mu_{j_a}^{\frac{1}{4}} + \|\bm\mu\|_2^{\frac32}\mu_{i_a}\mu_{j_a}\right)\left(\mu_{i_b}^{\frac{1}{4}}  \mu_{j_b}^{\frac{1}{4}} + \|\bm\mu\|_2^{\frac32}\mu_{i_b}\mu_{j_b}\right)\\
&\quad \times \sqrt{\E[\Delta_{ca}]\E[\Delta_{cb} ] }.
\end{align*}
Then, analogously to \eqref{tildeB22}, we can proceed with
\begin{align}\label{tildeB120}
\widetilde{B}_{12}&\le C\|\bm\mu\|_2\sum_{1\le i\neq j\le n}\sum_{k\neq i,j}\sum_{\ell\neq i,j,k}
\Big[p_{ij}p_{ik}p_{j\ell}\Big(\mu_{i}^{\frac{1}{4}}  \mu_{k}^{\frac{1}{4}} + \|\bm\mu\|_2^{\frac32}\mu_{i}\mu_{k}\Big)
\Big(\mu_{j}^{\frac{1}{4}}  \mu_{\ell}^{\frac{1}{4}} + \|\bm\mu\|_2^{\frac32}\mu_{j}\mu_{\ell}\Big)p_{i\ell}^{\frac12}p_{jk}^{\frac12}\notag\\
&\quad +p_{ij}p_{ik}p_{i\ell}\Big(\mu_{i}^{\frac{1}{4}}  \mu_{k}^{\frac{1}{4}} + \|\bm\mu\|_2^{\frac32}\mu_{i}\mu_{k}\Big)
\Big(\mu_{i}^{\frac{1}{4}}  \mu_{\ell}^{\frac{1}{4}} + \|\bm\mu\|_2^{\frac32}\mu_{i}\mu_{\ell}\Big)
p_{jk}^{\frac12}p_{j\ell}^{\frac12}\Big]\notag\\
&\le C\|\bm\mu\|_2\sum_{i,j,k,\ell\in[n]}\Big[\mu_i^{\frac52}\mu_j^{\frac52}\mu_k^{\frac32}\mu_{\ell}^{\frac32} 
\Big(\mu_i^{\frac14}\mu_j^{\frac14}\mu_k^{\frac14}\mu_{\ell}^{\frac14}+
\|\bm\mu\|_2^{\frac32}\mu_i\mu_j^{\frac14}\mu_k\mu_{\ell}^{\frac14}+
\|\bm\mu\|_2^{3}\mu_i\mu_j\mu_k\mu_{\ell}\Big)\notag\\ 
&\quad +\mu_i^3\mu_j^2\mu_k^{\frac32}\mu_{\ell}^{\frac32}\Big(\mu_{i}^{\frac{1}{2}}\mu_{k}^{\frac{1}{4}}\mu_{\ell}^{\frac{1}{4}}+
\|\bm\mu\|_2^{\frac32}\mu_{i}^{\frac54}\mu_{k}\mu_{\ell}^{\frac14}+\|\bm\mu\|_2^{3}\mu_{i}^2\mu_k\mu_{\ell}\Big)\Big]\notag\\
&= C \|\bm\mu\|_{2}\Big( \|\bm\mu\|_{\frac{7}{4}}^{\frac{7}{2}} \|\bm\mu\|^{\frac{11}{2}}_{\frac{11}{4}}  + \|\bm\mu\|^{\frac{7}{4}}_{\frac{7}{4}}
         \|\bm\mu\|_{2}^{\frac{3}{2}}
        \|\bm\mu\|_{\frac{5}{2}}^{\frac{5}{2}}
        \|\bm\mu\|^{\frac{11}{4}}_{\frac{11}{4}}
        \|\bm\mu\|_{\frac{7}{2}}^{\frac{7}{2}}+\|\bm\mu\|_{2}^{3}\|\bm\mu\|^{5}_ {\frac{5}{2}}\|\bm\mu\|^{7}_{\frac{7}{2}}  \notag\\  
        & \quad  +\|\bm\mu\|_{\frac{7}{4}}^{\frac{7}{2}} \|\bm\mu\|^{2}_2 \|\bm\mu\|^{\frac{7}{2}}_{\frac{7}{2}}    + \|\bm\mu\|_{\frac{7}{4}}^{\frac{7}{4}}    \|\bm\mu\|_{2}^{\frac{7}{2}}  \|\bm\mu\|_{\frac{5}{2}}^{\frac{5}{2}} \|\bm\mu\|_{\frac{17}{4}}^{\frac{17}{4}}    +   \|\bm\mu\|_{2}^{5} \|\bm\mu\|_{\frac{5}{2}}^{5} \|\bm\mu\|_{5}^{5}\Big),
\end{align}
where the second inequality follows from $p_{ij}\le \mu_i\mu_j$ for all $i,j\in[n]$, together with symmetry. 
We denote the six terms in the parentheses above in turn by 
$\widetilde{B}_{12}^{(1)},\widetilde{B}_{12}^{(2)},\dots,\widetilde{B}_{12}^{(6)}$.
Clearly, from the relation $(\widetilde{B}_{12}^{(2)})^2=\widetilde{B}_{12}^{(1)}\widetilde{B}_{12}^{(3)}$, we have
\begin{equation}\label{B121}
  \widetilde{B}_{12}^{(2)}\le \frac12\big(\widetilde{B}_{12}^{(1)}+\widetilde{B}_{12}^{(3)}\big).
  \end{equation}
Setting \(s = \frac{7}{2}\) and \(t = 5\) in Lemma \ref{normsCS} yields that 
\[
\|\bm\mu\|^{\frac{17}{2}}_{\frac{17}{4}}  \le \|\bm\mu\|^{5}_5 \|\bm\mu\|^{\frac{7}{2}}_{\frac{7}{2}},
\]
which implies $(\widetilde{B}_{12}^{(5)})^2\le\widetilde{B}_{12}^{(4)}\widetilde{B}_{12}^{(6)}$.
Hence, we also have
\begin{equation}
  \widetilde{B}_{12}^{(5)}\le \frac12\big(\widetilde{B}_{12}^{(4)}+\widetilde{B}_{12}^{(6)}\big)
  =\frac12\|\bm\mu\|^{2}_2(A_5^2+A_4^2).  
\end{equation}
Furthermore, setting $(s,t)=(2,7/2)$ and $(2,5)$ in Lemma \ref{normsCS} yields  
\[
\|\bm\mu\|^{\frac{11}{2}}_{\frac{11}{4}}  \le \|\bm\mu\|^{2}_2 \|\bm\mu\|^{\frac{7}{2}}_{\frac{7}{2}} \quad \mbox{and} \quad 
\|\bm\mu\|^7_{\frac{7}{2}}\le \|\bm\mu\|^{2}_2\|\bm\mu\|^{5}_5,
\]
from which it follows that
\begin{equation}\label{B123}
\widetilde{B}_{12}^{(1)}\le \widetilde{B}_{12}^{(4)}, \quad \widetilde{B}_{12}^{(3)}\le \widetilde{B}_{12}^{(6)}.
\end{equation}
Combining \eqref{B121}-\eqref{B123} with \eqref{tildeB120}, we thus have
\begin{align} \label{tildeB12}
\widetilde B_{12} \le C\|\bm\mu\|_{2}\big(\widetilde{B}_{12}^{(4)}+\widetilde{B}_{12}^{(6)}\big)
     =C\|\bm\mu\|_{2}^3(A_4^2+A_5^2).
\end{align}
Hence,  by \eqref{tildeB2}-\eqref{tildeB11} and \eqref{tildeB12} we arrive at
\begin{align*}
\widetilde B_{1}+\widetilde B_{2}\le C\|\bm\mu\|_{2}^3(A_1^2+A_2^2+A_4^2+A_5^2).
\end{align*}
This, together with \eqref{tildeB345}, proves \eqref{B_K},
and thus completes the proof of Theorem \ref{Theorem:main}.
\end{proof}

To prove Theorem~\ref{Thm:AN}, we introduce the following auxiliary lemma, which
provides a reverse Cauchy-Schwarz inequality for positive vectors.

\begin{lemma}\label{lemma:reverse}
Let $\bm{x}=(x_1,\dots,x_n)$ and $\bm{y}=(y_1,\dots,y_n)$ be positive vectors (i.e., all entries are positive). 
For any real $s,t>0$,  we have
\begin{equation*}
\|\bm{x}\|_s^{s}\,\|\bm{y}\|_t^{t}
\le \frac{x_{\max}^{\,s/2}\, y_{\max}^{\,t/2}}{x_{\min}^{\,s/2}\, y_{\min}^{\,t/2}}
\Big(\sum_{i=1}^n x_i^{s/2} y_i^{t/2}\Big)^2.
\end{equation*}
\end{lemma}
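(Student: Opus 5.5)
We prove the reverse Cauchy--Schwarz inequality of Lemma~\ref{lemma:reverse} directly. The idea is to expand the product $\|\bm x\|_s^s\|\bm y\|_t^t$ as a double sum and compare each term with the corresponding term in the square of $\sum_i x_i^{s/2}y_i^{t/2}$, using only the extreme values of the entries.

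First, write
\[
\|\bm{x}\|_s^{s}\,\|\bm{y}\|_t^{t}=\sum_{i=1}^n\sum_{j=1}^n x_i^s y_j^t
\]
and
\[
\Big(\sum_{i=1}^n x_i^{s/2} y_i^{t/2}\Big)^2=\sum_{i=1}^n\sum_{j=1}^n x_i^{s/2}y_i^{t/2}x_j^{s/2}y_j^{t/2}.
\]
Both sums are indexed by ordered pairs $(i,j)\in[n]^2$ and all terms are positive. It therefore suffices to prove, for each fixed pair $(i,j)$,
\[
x_i^s y_j^t\le \frac{x_{\max}^{\,s/2}\, y_{\max}^{\,t/2}}{x_{\min}^{\,s/2}\, y_{\min}^{\,t/2}}\,x_i^{s/2}y_i^{t/2}x_j^{s/2}y_j^{t/2}.
\]

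Dividing by the positive quantity $x_i^{s/2}y_j^{t/2}x_j^{s/2}y_i^{t/2}$ reduces this to
\[
\Big(\frac{x_i}{x_j}\Big)^{s/2}\Big(\frac{y_j}{y_i}\Big)^{t/2}\le \Big(\frac{x_{\max}}{x_{\min}}\Big)^{s/2}\Big(\frac{y_{\max}}{y_{\min}}\Big)^{t/2}.
\]
This holds because $x_i/x_j\le x_{\max}/x_{\min}$, $y_j/y_i\le y_{\max}/y_{\min}$, and the map $u\mapsto u^r$ is increasing on $(0,\infty)$ for $r>0$. Summing the termwise inequality over all $(i,j)$ gives the lemma.

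There is no real obstacle. The only point needing care is that the comparison is made on the full double sum over ordered pairs, which includes the diagonal terms $i=j$; there the inequality holds trivially since the constant is at least $1$. One might first try a slicker route, bounding $x_i^s\le (x_{\max}/x_{\min})^{s/2}x_i^{s/2}x_{\min}^{s/2}$ and similarly for $y_j^t$, but this produces mismatched index structures. The termwise ratio argument above avoids that issue.
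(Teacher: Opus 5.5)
Your proof is correct and is essentially the paper's argument: both expand $\|\bm x\|_s^s\|\bm y\|_t^t$ and $\big(\sum_i x_i^{s/2}y_i^{t/2}\big)^2$ as double sums over ordered pairs $(i,j)$ and compare term by term. The termwise bound uses $x_i\le x_{\max}$, $x_j\ge x_{\min}$ and the analogous bounds for $\bm y$; you phrase this as a ratio bound, while the paper writes the same termwise inequality after multiplying through by $x_{\min}^{s/2}y_{\min}^{t/2}$.
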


\begin{proof}
Since all entries are positive, we have
\begin{align*}
x_{\min}^{s/2}y_{\min}^{t/2}\|\bm{x}\|_s^{s}\|\bm{y}\|_t^{t}
&=x_{\min}^{s/2}y_{\min}^{t/2}\sum_{i=1}^n\sum_{j=1}^n x_i^{s}y_j^{t} \\
&=\sum_{1\le i,j\le n} x_{\min}^{s/2}x_i^{s/2} x_i^{s/2}\cdot y_{\min}^{t/2}y_j^{t/2} y_j^{t/2}\\
&\le \sum_{1\le i,j\le n}x_j^{s/2}x_i^{s/2}x_{\max}^{s/2}\cdot y_i^{t/2}y_j^{t/2}y_{\max}^{t/2}\\
&=x_{\max}^{s/2}y_{\max}^{t/2}\Big(\sum_{i=1}^n x_i^{s/2}y_i^{t/2}\Big)^2,
\end{align*}
which proves the inequality as claimed. 
\end{proof}

Applying Lemma \ref{lemma:reverse} to $\bm{x}=\bm{y}=\bm{\mu}$ yields that for any $s,t>0$,
\begin{align}\label{eq:reverse2}
\|\bm\mu\|_s^s \,\|\bm\mu\|_t^t
\le \Big(\frac{\mu_{\max}}{\mu_{\min}}\Big)^{\frac{s+t}{2}} \|\bm\mu\|_{\frac{s+t}{2}}^{\,s+t}.
\end{align}

\begin{proof}[Proof of Theorem \ref{Thm:AN}]
Since the convergence in Kolmogorov distance implies the convergence in distribution,
Theorem \ref{Theorem:main} reduces our task to proving that under the conditions of Theorem \ref{Thm:AN},
\begin{align}\label{Aello}
A_{\ell}=o\Big(\|\bm\mu\|_2^{\frac{5} {2}}\|\bm\mu\|_3^6+\|\bm\mu\|_2^{\frac{9} {2}}\Big), \quad \ell=1,2,\dots,5.
\end{align}

The above relation holds for $A_3$ and $A_4$ only under the sparsity condition \eqref{eq:condion1}.
Indeed, by \eqref{normorder} we have
\[
A_3=\|\bm\mu\|_2^{\frac{5} {2}}\|\bm\mu\|_5^5=o\Big(\|\bm\mu\|_2^{\frac{9} {2}}\Big),
\qquad
A_4=\|\bm\mu\|_{2}^{\frac{3}{2}}\|\bm\mu\|_{\frac{5}{2}}^{\frac{5}{2}}\|\bm\mu\|_{5}^{\frac{5}{2}}=o\Big(\|\bm\mu\|_2^{\frac{9} {2}}\Big),
\]
and thus \eqref{Aello} holds for $\ell=3$ and 4.

In the following, we verify  \eqref{Aello}  for  $A_1,A_2$ and $A_5$ under either \eqref{heterocond} or \eqref{eq:condition2}.
We first assume \eqref{eq:condion1} and \eqref{heterocond}.

Consider $A_1$. Noting that $\|\bm\mu\|_{3/2}^{3/4} = O\big(\|\bm\mu\|_2^3\big)$ under \eqref{heterocond}
and $\|\bm\mu\|_{5/2}^{5/4} = o\big(\|\bm\mu\|_2 \big)$ under \eqref{eq:condion1}, we obtain 
\begin{align*}
A_1=\|\bm\mu\|_{\frac{3}{2}}^{\frac{3}{4}} \|\bm\mu\|_{\frac{5}{2}}^{\frac{5}{4}}
=o\big(\|\bm\mu\|_{2} ^ 4\big)
=o\Big(\|\bm\mu\|_{2} ^ {\frac{9}{2}}\Big),
\end{align*}
and thus \eqref{Aello} holds for $\ell=1$.

Similarly, using the facts $\|\bm\mu\|_{7/2}^{7/4}=o(\|\bm\mu\|_{3}^{3/2})$ and $\|\bm\mu\|_{4}^{2}=o(\|\bm\mu\|_{3}^{3/2})$, 
for $A_2$ we have 
\begin{align*}
A_2=\|\bm\mu\|_{\frac{3}{2}}^{\frac{3}{4}}\|\bm\mu\|_{2}^{\frac12}\|\bm\mu\|_{\frac{7}{2}}^{\frac{7}{4}}\|\bm\mu\|_{4}^{2}
=o\Big(\|\bm\mu\|_{2}^{\frac{7}{2}}\|\bm\mu\|_{3}^{3}\Big)
=o\Big( \Big(\|\bm\mu\|_2^{\frac{5} {2}}\|\bm\mu\|_3^6\Big)^{\frac12}
       \Big(\|\bm\mu\|_2^{\frac{9} {2}}\Big)^{\frac12} \Big).
\end{align*}
Then it follows from the basic inequality $\sqrt{xy}\le (x+y)/2$ for $x,y>0$ that \eqref{Aello} holds for $\ell=2$.

For $A_5$,  Lemma \ref{normsCS} with $s=3/2$ and $t=2$ yields 
\begin{equation*}
\|\bm\mu\|_{\frac{7}{4}}^{\frac{7}{2}} \le \|\bm\mu\|_{\frac{3}{2}} ^ {\frac{3}{2}} \|\bm\mu\|_2^2,   
\end{equation*}
which implies that 
\begin{equation}\label{A5bound}
A_5=\|\bm\mu\|_{\frac{7}{4}}^{\frac{7}{4}} \|\bm\mu\|_{\frac{7}{2}}^{\frac{7}{4}} 
\le\|\bm\mu\|_{\frac{3}{2}} ^ {\frac{3}{4}} \|\bm\mu\|_2\|\bm\mu\|_{\frac{7}{2}}^{\frac{7}{4}}.
\end{equation}
Thus, by \eqref{heterocond} and the fact $\|\bm\mu\|_{7/2}^{7/4}=o(\|\bm\mu\|_{3}^{3/2})$, we have
\begin{equation}\label{A5bound1}
A_5=o\Big(\|\bm\mu\|_{2}^{4}\|\bm\mu\|_3 ^ {\frac{3}{2}}\Big)
   =o\Big( \Big(\|\bm\mu\|_2^{\frac{5} {2}}\|\bm\mu\|_3^6\Big)^{\frac14}\Big(\|\bm\mu\|_2^{\frac{9} {2}}\Big)^{\frac34} \Big).
\end{equation}
Then it follows from the inequality $(xy^3)^{1/4}\le (x+3y)/4$ for $x,y>0$ that \eqref{Aello} holds for $\ell=5$.

Finally, under \eqref{eq:condion1} and \eqref{eq:condition2}, we invoke \eqref{eq:reverse2} to obtain
\begin{align}\label{eq:reverse3}
 \|\bm\mu\|_s^{\frac s2} \|\bm\mu\|_t^{\frac t2} = O\Big(\|\bm\mu\|_{2}^{{\frac {3(s+t)}{8}}}
 \|\bm\mu\|_{\frac{s+t}{2}}^{\frac{s+t}{2}}\Big).
\end{align}
We also consider $A_1,A_2$ and $A_5$ in turn.

For \(s = \frac{3}{2}\) and \(t = \frac{5}{2}\), the relation \eqref{eq:reverse3} gives
\begin{align}\label{A1bound}
A_1=\|\bm\mu\|_{\frac{3}{2}}^{\frac{3}{4}} \|\bm\mu\|_{\frac{5}{2}}^{\frac{5}{4}}=O\big(\|\bm\mu\|_{2}^{\frac72}\big)
=o\Big(\|\bm\mu\|_{2} ^ {\frac{9}{2}}\Big),
\end{align}
and thus \eqref{Aello} holds for $\ell=1$.

For $A_2$, by the second equality in \eqref{A1bound} and the simple fact $\|\bm\mu\|_{4}^{2}=o(\|\bm\mu\|_{3}^{3/2})$, we have
\[
A_2=\|\bm\mu\|_{\frac{3}{2}}^{\frac{3}{4}}\|\bm\mu\|_{2}^{\frac12}\|\bm\mu\|_{\frac{7}{2}}^{\frac{7}{4}}\|\bm\mu\|_{4}^{2}
=o\Big(\|\bm\mu\|_{\frac{3}{2}}^{\frac{3}{4}}\|\bm\mu\|_{2}^{\frac12}\|\bm\mu\|_{\frac{5}{2}}^{\frac{5}{4}}\|\bm\mu\|_{4}^{2}\Big)
=o\Big(\|\bm\mu\|_{2}^{4}\|\bm\mu\|_{3}^{\frac32}\Big),
\]
which matches \eqref{A5bound1}, and thus \eqref{Aello} holds for $\ell=2$.

Consider $A_5$. Note that \eqref{A5bound} holds under \eqref{eq:condion1},
and Lemma \ref{normsCS} with $s=2$ and $t=3$ gives
$\|\bm\mu\|_{5/2}^{5/2}\le \|\bm\mu\|_2\|\bm\mu\|_3^{3/2}$.
Applying \eqref{eq:reverse3} with \(s = \frac{3}{2}\) and \(t = \frac{7}{2}\)
to \eqref{A5bound},  we thus have
\[
A_5=o\Big(\|\bm\mu\|_{2} ^ {\frac{23}{8}}\|\bm\mu\|_{\frac{5}{2}}^{\frac{5}{2}}\Big)=o\Big(\|\bm\mu\|_{2}^3\|\bm\mu\|_{\frac{5}{2}}^{\frac{5}{2}}\Big)
=o\Big(\|\bm\mu\|_{2}^{4}\|\bm\mu\|_3 ^ {\frac{3}{2}}\Big),
\]
which also matches \eqref{A5bound1}. 
This proves \eqref{Aello} for $\ell=5$, 
and completes the proof of Theorem \ref{Thm:AN}.
\end{proof}

\section*{Acknowledgment}
We wish to thank two anonymous referees for their constructive comments that help improve the quality of the paper.

\bibliographystyle{plain}
\bibliography{reference}

@article{alon2007,
 title={Network motifs: theory and experimental approaches},
 author={Alon, U.},
 journal={Nat. Rev. Genet.},
 volume={8},
 pages={450--461},
year={2007},
}

@article{barabasi1999,
  title={Emergence of scaling in random networks},
  author={Barab{\'a}si, A. -L. and Albert, R.},
  journal={Science},
  volume={286},
  pages={509--512},
  year={1999},
}

@book{barbour2005introduction,
  title={An Introduction to Stein's Method},
  author={Barbour, A. D. and Chen, L. H. Y.},
  year={2005},
  publisher={World Scientific},
  address={Singapore}
}

@book{bollobas2001,
    title={Random Graphs, 2nd Edition},
    author={Bollob\'{a}s, B.},
    year={2001},
    publisher={Cambridge University Press},
    address={Cambridge},
}

@article{bullmore2009,
 title={Complex brain networks: graph theoretical analysis of structural and functional systems},
 author={Bullmore, E. and Sporns, O.},
 journal={Nat. Rev. Neurosci.},
 volume={10},
 pages={186--198},
 year={2009},
}

@article{Chang2024,
title={Edge differentially private estimation in the $\beta$-model via jittering and method of moments},
author={Chang, J. and Hu, Q. and Kolaczyk, E. D. and Yao, Q. and Yi, F.},
journal={Ann. Statist.},
year={2024},
number = {2},
volume={52},
pages={508--528},
}

@article{Chatterjee2011RandomGW,
  title={Random graphs with a given degree sequence},
  author={Chatterjee, S. and Diaconis, P. and Sly, A.},
  journal={Ann. Appl. Probab.},
  year={2011},
  number = {4},
  volume={21},
  pages={1400--1435},
  url={https://api.semanticscholar.org/CorpusID:17203741}
}

@article{Chen2021,
author = {Chen, M. and Kato, K. and Leng, C.},
title = {Analysis of networks via the sparse $\beta$-model},
journal = {J. R. Stat. Soc. B‌‌},
volume = {83},
number = {5},
pages = {887-910},
doi = {https://doi.org/10.1111/rssb.12444},
year = {2021}
}

@article{clauset2009,
title={Power-law distributions in empirical data},
author={Clauset, A. and Shalizi, C. R. and Newman, M. E. J.},
journal={SIAM Review},
year={2009},
volume={51},
number={4},
pages={661--703},
}

@article{eichelsbacher2023Kol,
title={Kolmogorov bounds for decomposable random variables and subgraph counting by the {S}tein-{T}ikhomirov method},
author={Eichelsbacher, P. and Redno{\ss}, B.},
year={2023},
journal={Bernoulli},
volume={29},
number={3},
pages={1821--1848},
}

@article{eichelsbacher2023simplified,
  title={A simplified second-order {G}aussian {P}oincar{\'e} inequality in discrete setting with applications},
  author={Eichelsbacher, P. and Redno{\ss}, B. and Th{\"a}le, C. and Zheng, G.},
  journal={Ann. Inst. H. Poincar\'e Probab. Statist.},
  volume={59},
  number={1},
  pages={271--302},
  year={2023}
}

@article{Estrada2010,
  title = {Quantifying network heterogeneity},
  author = {Estrada, E.},
  journal = {Phys. Rev. E},
  volume = {82},
  issue = {6},
  pages = {066102},
  year = {2010},
  publisher = {American Physical Society},
  doi = {10.1103/PhysRevE.82.066102},
}

@article{holland1981,
author={Holland, P. W. and Leinhardt, S.},
year={1981},
title={An exponential family of probability distributions for directed graphs},
journal={J. Amer. Stat. Assoc.},
volume={76}, 
pages={33--50},
}

@book{janson2000random,
	title={Random Graphs},
	author={Svante, J. and \L{}uczak, T. and Ruci\'nski, A.},
	year={2000},
	publisher={John Wiley \& Sons},
	address={New York}
}

@article{jin2015,
author={Jin, J.},
year={2015},
title={Fast community detection by {SCORE}},
journal={Ann. Statist.},
volume={43},
number={1},
pages={57--89},
}

@article{Karwa2016,
title={Inference using noisy degrees-Differentially private $\beta$-model and synthetic graphs},
author={V. Karwa and A. Slavkovi\'c},
journal={Ann. Statist.},
volume={44},
year={2016},
pages={87--112}
}

@article{Krokowski2017Rademacher,
author = {Krokowski, K. and Reichenbachs, A. and Th\"{a}le, C.},
year = {2016},
pages = {763--803},
title = {Berry-{E}sseen bounds and multivariate limit theorems for functionals of {R}ademacher sequences},
volume = {52},
number = {2},
journal = {Ann. Inst. H. Poincar\'e Probab. Statist.},
doi = {10.1214/14-AIHP652}
}

@article{Kai2017DiscreteMalliavin,
author = {Krokowski, K. and Reichenbachs, A. and Th{\"a}le, C.},
title = {{D}iscrete {M}alliavin–{S}tein method: {B}erry-{E}sseen bounds for random graphs and percolation},
volume = {45},
journal = {Ann. Probab.},
number = {2},
publisher = {Institute of Mathematical Statistics},
pages = {1071--1109},
year = {2017},
doi = {10.1214/15-AOP1081},
}

@article{milo2002,
author = {Milo, R. and Shen-Orr, S. and Itzkovitz, S. and Kashtan, N. and Chklovskii, D. and Alon, U.},
title = {Network Motifs: Simple Building Blocks of Complex Networks},
journal = {Science},
volume ={298},
year={2002},
pages = {824--827},
}

@book{nualart2006,
 title = {The Malliavin Calculus and Related Topics},
 author = {Nualart, D.},
 publisher={Springer}, 
 address={Berlin}, 
 year={2006}, 
}

@article{newman2009random,
 title = {Random Graphs with Clustering},
  author = {Newman, M. E. J.},
  journal = {Phys. Rev. Lett.},
  volume = {103},
  issue = {5},
  pages = {058701},
  numpages = {4},
  year = {2009},
  publisher = {American Physical Society},
  doi = {10.1103/PhysRevLett.103.058701},
  url = {https://link.aps.org/doi/10.1103/PhysRevLett.103.058701}
}

@book{newman2018,
 title = {Networks, 2nd Edition},
 author = {Newman, M. E. J.},
 publisher={Oxford University Press}, 
 address={Oxford}, 
 year={2018}, 
}

@article{newman2001,
title = {Random graphs with arbitrary degree distributions and their applications},
author = {Newman, M. E. J. and Strogatz, S. H. and Watts, D. J.},
journal = {Phys. Rev. E},
volume = {64},
year = {2001},
pages = {026118},
}

@book{Nourdin_Peccati_2012, 
address={Cambridge}, 
title={Normal Approximations with Malliavin Calculus: From Stein’s Method to Universality}, 
publisher={Cambridge University Press}, 
author={Nourdin, I. and Peccati, G.}, 
year={2012}, 
}

@article{Privault2008StochasticAO,
 author = {N. Privault},
title = {{Stochastic analysis of Bernoulli processes}},
volume = {5},
journal = {Probab. Surv.},
publisher = {Institute of Mathematical Statistics and Bernoulli Society},
pages = {435 -- 483},
year = {2008},
doi = {10.1214/08-PS139},
}

@article{Privault2020,
author = {N. Privault and Serafin, G.},
title = {Normal approximation for sums of weighted {$U$}-statistics ---application to 
{K}olmogorov bounds in random subgraph counting},
journal = {Bernoulli},
year = {2020},
volume={26},
number={1},
pages={587--615}
}

@article{Ribeiro2021,
author = {Ribeiro, P. and Paredes, P. and Silva, M. E. P. and Aparicio, D. and Silva, F.},
title = {A Survey on Subgraph Counting: Concepts, Algorithms, and Applications to Network Motifs and Graphlets},
year = {2021},
issue_date = {March 2022},
publisher = {Association for Computing Machinery},
address = {New York, NY, USA},
volume = {54},
number = {2},
issn = {0360-0300},
url = {https://doi.org/10.1145/3433652},
doi = {10.1145/3433652},
journal = {ACM Comput. Surv.},
pages= {Article 28},
}

@article{Rinaldo2013,
author = "Rinaldo, A. and Petrovi\'c, S. and Fienberg, S. E.",
doi = "10.1214/12-AOS1078",
fjournal = "The Annals of Statistics",
journal = "Ann. Statist.",
number = "3",
pages = "1085--1110",
publisher = "The Institute of Mathematical Statistics",
title = "Maximum lilkelihood estimation in the $\beta$-model",
url = "https://doi.org/10.1214/12-AOS1078",
volume = "41",
year = "2013"
}

@article{robins2007,
author={Robins, G. and Pattison, P. and Kalish, Y. and Lusher, D.},
title={An introduction to exponential random graph models for social networks},
year={2007},
journal={Soc. Networks},
volume={29},
pages={173--191},
}

@article{rollin2022,
author={R\"{o}llin, A.}, 
title={Kolmogorov bounds for the normal approximation of the number of triangles in the {Erd\H{o}s-R\'enyi} random graph},
year={2022},
journal={Prob. Eng. Inf. Sci.},
volume={36},
pages={747--773},
}

@article{Rucinski1988WhenAS,
  title={When are small subgraphs of a random graph normally distributed?},
  author={Ruci\'nski, A.},
  journal={Probab. Theory Relat. Fields},
  year={1988},
  volume={78},
  pages={1--10},
  url={https://api.semanticscholar.org/CorpusID:119856637}
}

@inproceedings{Teixeira2015,
author = {Teixeira, C. H. C. and Fonseca, A. J. and Serafini, M. and Siganos, G. and Zaki, M. J. and Aboulnaga, A.},
title = {Arabesque: a system for distributed graph mining},
booktitle = {Proceedings of the 25th Symposium on Operating Systems Principles},
year = {2015},
address = {New York},
doi = {10.1145/2815400.2815410},
pages = {425–440},
numpages = {16},
location = {Monterey, California},
series = {SOSP '15}
}

@article{watts1998,
author={Watts, D. J. and Strogatz, S. H.}, 
title={Collective dynamics of `small-world' networks},
year={1998},
journal={Nature},
volume={393},
pages={440--442},
}

@article{Yan2013clt,
author = {Yan, T. and Xu, J.},
title = "{A central limit theorem in the $\beta$-model for undirected random graphs with a diverging number of vertices}",
journal = {Biometrika},
volume = {100},
number = {2},
pages = {519-524},
year = {2013},
doi = {10.1093/biomet/ass084},
}

@article{butzek2024nonuni,
title={Non-uniform {B}erry-{E}sseen bounds for {G}aussian, {P}oisson and {R}ademacher processes}, 
author={Butzek, M. and Eichelsbacher, P.},
year={2024},
volume= {arXiv:2409.09439v1},
journal={arXiv preprint},
url={https://arxiv.org/abs/2409.09439}, 
}

@article{Gesine2010SteinRademacher,
author = {Nourdin, I. and Peccati, G. and Reinert, G.},
title = {{S}tein's method and Stochastic Analysis of {R}ademacher Functionals},
volume = {15},
journal = {Electron. J. Probab.},
pages = {1703--1742},
year = {2010},
doi = {10.1214/EJP.v15-823},
}

@article{bhattacharya2023fluctuations,
  title={Fluctuations of subgraph counts in graphon based random graphs},
  author={Bhattacharya, B. B. and Chatterjee, A. and Janson, S.},
  journal={‌Comb. Probab. Comput.‌},
  volume={32},
  number={3},
  pages={428--464},
  year={2023},
  doi={10.1017/S0963548322000335},
}

@article{vanderhofstad2020assortativity,
  title={Limit theorems for assortativity and clustering in null models for scale-free networks},
  author={van der Hofstad, R. and van der Hoorn, P. and Litvak, N. and Stegehuis, C.},
  journal={Adv. Appl. Probab.},
  volume={52},
  number={4},
  pages={1035--1084},
  year={2020},
  doi={10.1017/apr.2020.42},
}

@article{KarrerNewman2011,   
 title={Stochastic blockmodels and community structure in networks},     
 author={Karrer, B. and Newman, M. E. J.},
 journal={Phys. Rev. E}, 
 volume={83}, 
 number={1}, 
 pages={016107}, 
 year={2011}, 
 doi={10.1103/PhysRevE.83.016107} 
}

@article{Abbe2018,
  author = {Abbe, E.},
  title   = {Community Detection and Stochastic Block Models: Recent Developments},
  journal = {J. Mach. Learn. Res.},
  year    = {2018},
  volume  = {18},
  number  = {177},
  pages   = {1--86},
  url     = {http://jmlr.org/papers/v18/16-480.html}
}

@article{MosselNeemanSly2015,
  title   = {Reconstruction and estimation in the planted partition model},
  author = {Mossel, E. and Neeman, J. and Sly, A.},
  journal = {Probab. Theory Relat. Fields},
  volume  = {162},
  number  = {3--4},
  pages   = {431--461},
  year    = {2015},
  doi     = {10.1007/s00440-014-0576-6}
}

@article{Hladky2021,
author = {Hladk\'{y}, J. and Pelekis, C. and \v{S}ileikis, M.},
title = {A limit theorem for small cliques in inhomogeneous random graphs},
journal = {J. Graph Theory},
volume = {97},
number = {4},
pages = {578-599},
doi = {https://doi.org/10.1002/jgt.22673},
url = {https://onlinelibrary.wiley.com/doi/abs/10.1002/jgt.22673},
year = {2021}
}
\end{document}